\theoremstyle{plain}
\newtheorem{thm}{Theorem}[section]
\newtheorem{prop}[thm]{Proposition}
\newtheorem{lemma}[thm]{Lemma}
\newtheorem{cor}[thm]{Corollary}
\theoremstyle{definition}
\newtheorem{defn}[thm]{Definition}
\theoremstyle{remark}
\newtheorem{rem}[thm]{Remark}
\numberwithin{equation}{section}
\title{A model structure on the category of equivariant $A$-modules over a Hopf algebra} 
\date{\today} 
\author{Mariko Ohara}
\address{Center for Liberal Arts and Sciences, Faculty of Engineering, Toyama Prefectural University, 5180, Kurokawa, Imizu City, Toyama Prefecture, JAPAN, 939-0398. 
\\ Tel : +81-766-56-7500.
}  
\email{primarydecomposition@gmail.com}
\subjclass[2020]{16T05,16S40,18G65,18G80, (primary), 20G42(secondary)}
\keywords{Hopf algebra, model category, derived category}
\newcommand{\Ker}{{\mathrm{Ker}}}
\newcommand{\Z}{{\mathbb{Z}}}
\newcommand{\Hom}{\mathrm{Hom}}
\newcommand{\lmod}{\underline{\mathrm{LMod}}}
\newcommand{\LMod}{\mathrm{LMod}}
\newcommand{\bysame}{\leavemode\hbox to3em{\hrulefill}\,}
\begin{document}
\thispagestyle{empty}

\begin{abstract}
Let $H$ be a finite dimensional Hopf algebra over a field $k$ and $A$ an $H$-module algebra over $k$. 
In \cite{Khov} and \cite{Qi}, Khovanov and Qi defined acyclic objects and quasi-isomorphisms by using null-homotopy and contractible objects. They also defined the cofibrant objects, the derived category of $A \# H$-modules, and showed properties of compact generators. On the other hand, a model structure on the category of $A \# H$-modules are not mentioned yet. 

In this paper, we check that the category of $A\# H$-modules admits a model structure, where the cofibrant objects and the derived category are just those defined in \cite{Qi}. We 
show that the model structure is cofibrantly generated. 
\end{abstract}

\maketitle


\section{Introduction}
Let $H$ be a finite dimensional Hopf algebra over a field $k$, which is not semisimple, and $A$ an $H$-module algebra, and we denote by $A \# H$ be a $k$-algebra which is called smash algebra of $A$ and $H$ in this paper. 

Recall that the category of $H$-modules admits a model structure, called the stable model structure, whose cofibrations and fibrations are monomorphisms and surjections, respectively, and weak equivalences are stable equivalences by using projective or injective objects which plays a role of contractible objects. 

The category of modules over a certain Hopf algebra can be regarded as a category of chain complexes or dg modules since there is a category equivalence between the category of chain complexes and the category of $\Z$-graded $\Lambda (d)$-modules. Here, $\Lambda (d)$ be the exterior algebra generated by a degree $-1$ element $d$ regarded as a Hopf algebra. 

Khovanov~\cite{Khov} defined acyclic objects by using projective or injective objects, and show that, for a finite dimentional Hopf algebra $H$ which is not semisimple, the quotient category of $H$-modules with respect to stable equivalences are triangulated. 

Qi~\cite{Qi} developed the work to the category of $A \# H$-modules. In \cite{Qi}, the triangulated category and derived category of $A \# H$-modules are given by using the stable equivalences in the category of $H$-modules. He also defined the cofibrant objects and compact objects in the category of $A \# H$-modules and showed the existence of functorial cofibrant replacement functor and gave the compact generator in the derived category. 
They calculated the Grothendieck groups $K_0$ and $G_0$ of those derived categories.  Qi also implies the existence of higher $K$-theory. 

However, we note that any model structure of the category of $A \# H$-modules is not given yet.  

As in \cite{TO}, if we define weak equivalences as those maps which induce isomorphisms on Hopf-homologies in all degree by using suspension, we obtain a model structure on the category of $A \# H$-modules. However, the derived category associated this model structure is different from the derived category defined in \cite{Qi}.

In this paper, we adopt the stable equivalences as weak equivalences, which is the same as in \cite{Qi}, surjections as fibrations and those maps satisfying left lifting property as cofibrations. It is the same thing using the following adjunction 
$$F : \LMod_H \rightleftarrows \LMod_{A \# H} : U,$$ 
where $F$ is the free functor and $U$ is the forgetful functor. We define fibrations and weak equivalences in $\LMod_{A \# H}$ to be those maps in $\LMod_H$ via the forgetful functor $U$. 
We show that these three classes of maps define a model structure on the category of $A \# H$-modules and see that cofibrant objects in the model category and associated derived category is just defined in \cite{Qi}. 
We also check that the model structure is cofibrantly generated. 

Note that the relative model structure on the category of dg modules over a dg algebra plays the central role of relative homological algebra. The relative model structure is not cofibrantly generated in general, so that it is different from 
 the model structure in this paper. (cf. \cite[Remark 4.7]{BMR}) 

We assume the condition $\varepsilon (\lambda) \ne 0$ for the left integral $\lambda$ on the Hopf algebra $H$ to show the factorization axiom. This assumption implies that we focus on a kind of group-algebra type Hopf algebra. 
The main theorem is the following. 

\begin{thm}
Let $k$ be a field. Let $H$ be a finite-dimensional Hopf algebra which is not semisimple. Assume $\varepsilon (\lambda) \ne 0$ for a left integral $\lambda \in H$ and the counit map $\varepsilon : H \to k$. Let $A$ be a left $H$-module algebra over $k$. 
Then, the category of left $A \# H$-modules admits a model structure given by the following three classes of maps: 
\begin{enumerate}[(i)]
\item The fibrations and weak equivalences are given by those maps which are fibrations and weak equivalences as $H$-module maps, respectively. 
\item The cofibrations are given as those maps which has the left lifting property with respect to each map which is both a fibration and a weak equivalence. 
\end{enumerate}
The model structure is cofibrantly generated. Moreover, the cofibrant objects are precisely those objects defined in \cite{Qi} and the associated derived category is the same as in \cite{Qi}. \end{thm}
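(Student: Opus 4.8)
The plan is to transfer the stable model structure on $\LMod_H$ along the free-forgetful adjunction $F \dashv U$ using Kan's transfer criterion for cofibrantly generated model categories. First I would recall that $\LMod_H$, with cofibrations the monomorphisms, fibrations the surjections, and weak equivalences the stable equivalences, is cofibrantly generated; since $H$ is a finite-dimensional Hopf algebra the category $\LMod_H$ is locally presentable (indeed locally finite), so one can write down explicit generating sets $I$ and $J$ of cofibrations and trivial cofibrations — e.g. built from inclusions of sub-$H$-modules of a fixed projective generator, together with the maps $0 \to P$ for $P$ projective which detect the triviality. Applying $F$ to these sets gives candidate generating sets $FI$ and $FJ$ in $\LMod_{A\# H}$. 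The transfer principle then says: if (a) $\LMod_{A\# H}$ is locally presentable (hence the small object argument applies to $FI$ and $FJ$), and (b) every map obtained by transfinite composition of pushouts of maps in $FJ$ is a weak equivalence (i.e. $U$ of it is a stable equivalence), then the three classes in the theorem form a cofibrantly generated model structure, with $FI$, $FJ$ as generators.

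The two hypotheses of the transfer split into routine and nonroutine parts. Local presentability of $\LMod_{A\#H}$ is immediate since $A\# H$ is again a finite-dimensional $k$-algebra, so its module category is locally finite; this also gives that $U$ preserves all colimits (it is a right adjoint but here also a left adjoint, being restriction along a ring map) and in particular filtered colimits, so $U$ takes the relevant transfinite compositions in $\LMod_{A\#H}$ to the corresponding ones in $\LMod_H$. The heart of the matter — and the step I expect to be the main obstacle — is verifying condition (b): that relative $FJ$-cell complexes are stable equivalences. Here is where the hypothesis $\varepsilon(\lambda)\neq 0$ enters. Because $A\# H$ is free as a right $H$-module (via $h \mapsto 1\# h$ after using the module-algebra structure, $A\# H \cong A\otimes H$ as right $H$-modules), the functor $F = (A\# H)\otimes_H (-)$ sends projective $H$-modules to projective $A\# H$-modules, and more importantly sends the "contractible" generating trivial cofibrations $0 \to P$ to maps $0 \to FP$ with $FP$ projective. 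The condition on the integral ensures (via Khovanov's / Qi's analysis, and the classical fact that $\varepsilon(\lambda)\neq 0$ forces a separability-type splitting so that projectives and injectives over $H$ — and hence the notion of stable equivalence — are well-behaved) that $U(FP)$ is stably trivial, i.e. a contractible $H$-module, and that pushouts and transfinite composites of such maps stay stably trivial. I would therefore devote the bulk of the argument to showing: $F$ preserves the chosen generating trivial cofibrations up to stable equivalence, pushouts of these along arbitrary maps in $\LMod_{A\#H}$ remain $U$-stable-equivalences (using that a pushout of a split-injection-with-projective-cokernel stays so, together with long exact sequences / the triangulated structure on the stable category), and transfinite compositions of stable equivalences that are monomorphisms are stable equivalences (using preservation of filtered colimits by $U$ and the fact that a filtered colimit of contractibles is contractible in this finite setting).

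Once the transfer goes through, the model structure is by construction cofibrantly generated with generators $FI$ and $FJ$, and fibrations and weak equivalences are exactly the $U$-detected ones, giving parts (i) and (ii) of the statement. It remains to identify the cofibrant objects and the derived category with those of \cite{Qi}. For the cofibrant objects: an object $M$ is cofibrant iff $0 \to M$ has the left lifting property against all trivial fibrations, equivalently iff it is a retract of an $FI$-cell complex; I would match this with Qi's definition by showing that $FI$-cell complexes are precisely (retracts of) the objects built by iterated extensions from free $A\#H$-modules on projective $H$-modules — which is exactly Qi's class of cofibrant objects — using that $F$ of a projective $H$-module is a typical cell and that Qi's cofibrant replacement is itself such a cell complex. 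For the derived category: the homotopy category of any model category is the localization at weak equivalences, and since our weak equivalences are the stable equivalences — literally the same class Qi localizes at — the two localizations coincide; alternatively one invokes that the homotopy category is the full subcategory of cofibrant objects with homotopy classes of maps, and Qi's derived category admits the same description via his cofibrant replacement. I would close by remarking that no additional hypotheses beyond $\varepsilon(\lambda)\neq 0$ are used, consistent with the statement, and that the contrast with the non-cofibrantly-generated relative model structure of \cite{BMR} is exactly the failure, there, of the transferred generating trivial cofibrations to detect the right weak equivalences without such an integral condition.
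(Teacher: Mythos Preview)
Your transfer-theorem strategy is correct and is genuinely different from the paper's approach. The paper does \emph{not} invoke Kan's transfer principle; instead it verifies the model-category axioms by hand: 2-out-of-3 and retract stability are checked directly, lifting is immediate from the definitions, and the factorization axiom is established by constructing explicit mapping-cylinder and mapping-path objects $C_f$, $P_f$ and combining them with Qi's cofibrant replacement. Cofibrant generation is then proved separately by exhibiting the sets $I_{A\#H}=\{A\otimes J\to A\otimes H\}$ and $J_{A\#H}=\{0\to A\otimes H\}$ and checking the $I$-inj and $J$-inj conditions via the adjunction. Your route packages all of this into the single acyclicity check that relative $FJ$-cell complexes are $U$-stable equivalences, and delivers cofibrant generation for free; the paper's route has the advantage of making the factorizations and the role of the $A$-splittings concrete.

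Two small corrections to your outline. First, $A$ is not assumed finite-dimensional, so $A\#H$ need not be; local presentability of $\LMod_{A\#H}$ holds simply because it is a module category, and smallness of the domains $A\otimes J$ holds for the same reason. Second, your localization of where $\varepsilon(\lambda)\neq 0$ is used is off: the fact that $U(F P)=A\otimes P$ is projective over $H$ for $P$ projective needs only the standard lemma that tensoring with a projective $H$-module yields a projective $H$-module, and the acyclicity condition then follows without any integral hypothesis. In the paper the assumption $\varepsilon(\lambda)\neq 0$ is used solely to produce the explicit maps $C_f\to Y$ and $X\to P_f$ in the hand-built factorizations, not for any stability-under-pushout argument. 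Finally, for the identification of cofibrant objects you are working harder than necessary: Qi's definition (reproduced in the paper) is literally that $M$ has the lifting property against surjective quasi-isomorphisms, so once you note that surjective quasi-isomorphisms coincide with trivial fibrations the two notions of cofibrant agree immediately, with no need to analyze $FI$-cell complexes.
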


\subsubsection*{Acknowledgement}
The author would like to thank Professor Dai Tamaki to suggest study Hopf algebras. The author also would like to thank Professor Takeshi Torii to advise comparing with the stable category of certain spectra.

\section{Stable module category of $H$-modules and $A \# H$-modules}
Let $k$ be a field and $H$ a finite dimensional Hopf algebra over $k$. 
The Hopf algebra $H$ which is finite dimensional over a field $k$ is a Frobenius algebra, so that a projective $H$-module is just an injective $H$-module. We assume that $H$ is not semisimple throughout this paper for the existence of the left integral. Here, we say that $\lambda \in H$ is the left integral if it satisfies $h\lambda = \varepsilon(h)\lambda$ for all $h \in H$. 
We also note that the antipode $S$ of $H$ is bijective. 

We denote by $\LMod_H$ the category of $H$-modules. 
The category $\LMod_H$ inherits a canonical model structure as the category of modules over a Frobenius algebra as follows.

\begin{defn}
A morphism $f, g : X \to Y$ in $\LMod_{H}$ is a stably equivalent if $f-g$ factors through a projective $H$-module. This is an equivalence relation which is compatible with composition. A morphism $f, g : X \to Y$ of $H$-modules is a stable equivalence if it is an isomorphism after taking stable equivalence classes of $H$-module morphisms.  
\end{defn}

\begin{defn}\label{ModelH}
Let $f : X \to Y$ be a morphism in $\LMod_{H}$. 
\begin{enumerate}[(i)]
\item We say that $f$ is a weak equivalence if it is a stable equivalence. 
\item We say that $f$ is a fibration if it is a surjection. 
\item We say that $f$ is a cofibration if it is a monomorphism. 
\end{enumerate}
\end{defn}

All objects are cofibrant and fibrant, so the homotopy category $\lmod_H$ is just obtained as the quotient category with respect to the stably equivalent relation and the derived category, denoted by $\mathcal{D}(\LMod_H)$, that is given by inverting the class of weak equivalences on the subcategory of cofibrant objects with respect to this model structure is just obtained by inverting stable equivalences on the category $\LMod_H$. 

Khovanov~\cite{Khov} showed that the category $\lmod_H$ is triangulated via the following explicit suspension and desuspension. 

\begin{lemma}\label{lem23}
Let $\Sigma (M) = M \otimes (H / (\lambda))$ and $\Sigma^{-1} (M) = H \otimes \Ker (\varepsilon)$. Here, $\varepsilon : H \to k$ is the counit map. Then $\Sigma$ and $\Sigma^{-1}$ induce functors on $\lmod_{H}$ that are inverse each other. 
\end{lemma}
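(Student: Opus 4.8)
The plan is to identify the two functors with the cosyzygy and syzygy functors on the stable category and then to invoke the standard fact that, for a self-injective algebra, these are mutually inverse autoequivalences. Throughout, $\otimes$ denotes the tensor product over $k$ equipped with the diagonal $H$-action coming from the comultiplication, so that $\LMod_H$ is monoidal with unit the trivial module $k$ (on which $H$ acts through $\varepsilon$); accordingly I read $\Sigma^{-1}(M)$ as $M \otimes \Ker(\varepsilon)$, which is what makes it a functor inverse to $\Sigma$.

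First I would record the two fundamental short exact sequences of $H$-modules. The counit gives
$(\ddagger)\colon 0 \to \Ker(\varepsilon) \to H \xrightarrow{\varepsilon} k \to 0$,
and the map $k \to H$, $c \mapsto c\lambda$, is $H$-linear because $h\lambda = \varepsilon(h)\lambda$ and injective since $\lambda \ne 0$; its image is the one-dimensional ideal $(\lambda) = H\lambda = k\lambda$, giving
$(\dagger)\colon 0 \to k \to H \to H/(\lambda) \to 0$.
The crucial input is that for every $H$-module $M$ the module $M \otimes H$ is free: the untwisting isomorphism $M \otimes H \cong \underline{M} \otimes H$, where $\underline{M}$ carries the trivial action and $H$ acts only on the second tensor factor, is built from the comultiplication and the antipode $S$ (which is bijective here) and exhibits $M \otimes H$ as a sum of copies of $H$, hence projective and injective since $H$ is Frobenius.

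Next I would tensor each fundamental sequence with a fixed $M$. Since $k$ is a field the functor $M \otimes (-)$ is exact, so tensoring $(\dagger)$ yields $0 \to M \to M \otimes H \to \Sigma(M) \to 0$ and tensoring $(\ddagger)$ yields $0 \to \Sigma^{-1}(M) \to M \otimes H \to M \to 0$, with $M \otimes H$ projective-injective in both. The first sequence exhibits $\Sigma(M)$ as a cosyzygy $\Omega^{-1}(M)$ and the second exhibits $\Sigma^{-1}(M)$ as a syzygy $\Omega(M)$. I would also check that $\Sigma$ and $\Sigma^{-1}$ descend to $\lmod_H$: if a morphism factors through a projective $P$, then tensoring with $H/(\lambda)$ (resp.\ $\Ker(\varepsilon)$) makes it factor through $P \otimes H/(\lambda)$ (resp.\ $P \otimes \Ker(\varepsilon)$), which is again projective by the untwisting isomorphism, so these functors preserve the ideal of stably trivial maps.

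Finally, mutual invertibility follows from Schanuel's lemma in the stable category. Concretely, for $\Sigma^{-1}\Sigma(M) \cong M$ I would compare $0 \to M \to M \otimes H \to \Sigma M \to 0$ with the syzygy sequence $0 \to \Sigma^{-1}(\Sigma M) \to \Sigma M \otimes H \to \Sigma M \to 0$: both present $\Sigma M$ as a quotient of a projective, so Schanuel's lemma gives $\Sigma^{-1}\Sigma(M) \oplus (M \otimes H) \cong M \oplus (\Sigma M \otimes H)$ and hence $\Sigma^{-1}\Sigma(M) \cong M$ in $\lmod_H$; the dual argument, using injectivity, gives $\Sigma\Sigma^{-1}(M) \cong M$. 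The main obstacle is to make these isomorphisms natural in $M$: rather than invoking Schanuel object-wise, I would construct the comparison directly by lifting $M \otimes H \to \Sigma M$ through the surjection $\Sigma M \otimes H \to \Sigma M$ (using projectivity of $M \otimes H$) and restricting along $M \hookrightarrow M \otimes H$, then verify that the resulting natural transformation $\mathrm{id} \Rightarrow \Sigma^{-1}\Sigma$ is a stable isomorphism. That naturality check, together with the exactness of $M \otimes (-)$ and the untwisting isomorphism, are the only genuinely nontrivial points; the remainder is bookkeeping with the coproduct in Sweedler notation.
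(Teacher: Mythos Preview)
Your argument is correct, but the paper's proof is much shorter and proceeds differently. The paper simply invokes Khovanov's result that there is a projective $H$-module $Q$ with
\[
\Ker(\varepsilon)\otimes (H/(\lambda)) \cong k \oplus Q,
\]
from which $\Sigma^{-1}\Sigma(M)\cong M\otimes\bigl(\Ker(\varepsilon)\otimes(H/(\lambda))\bigr)\cong M\oplus(M\otimes Q)$ is stably isomorphic to $M$ (and similarly for the other composite). In other words, the paper reduces everything to a single explicit decomposition of the tensor product of the two ``shifting modules'' $\Ker(\varepsilon)$ and $H/(\lambda)$.

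Your route---identifying $\Sigma$ and $\Sigma^{-1}$ with the cosyzygy and syzygy functors via the sequences $0\to M\to M\otimes H\to\Sigma M\to 0$ and $0\to\Sigma^{-1}M\to M\otimes H\to M\to 0$, and then appealing to Schanuel's lemma---is a genuinely different packaging. It is more self-contained (you do not need to quote the specific Khovanov isomorphism) and it makes transparent \emph{why} the result holds for any Frobenius algebra, not just for this particular pair of modules. On the other hand, the paper's one-line appeal to Khovanov is quicker and also yields the explicit projective complement $Q$, whereas Schanuel only gives a stable isomorphism. Your careful discussion of naturality is a point the paper leaves implicit. Finally, your correction of the evident typo (reading $\Sigma^{-1}(M)=M\otimes\Ker(\varepsilon)$ rather than $H\otimes\Ker(\varepsilon)$) is right and is needed for either argument to make sense.
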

\begin{proof}
The assetion follows from the fact, proved by Khovanov~\cite{Khov}, that there exists a projective $H$-module $Q$ such that
\[
\Ker(\varepsilon) \otimes (H / (\lambda)) \cong k \oplus Q . 
\]
\end{proof}

The homotopy category $\lmod_{H}$ becomes a triangulated category with $\Sigma$ and $\Sigma^{-1}$. 

As in \cite{Qi}, we recall some properties of left $A \# H$-modules and the derived category of left $A \# H$-modules. 

Let $H$ be a finite dimensional Hopf algebra over a field $k$. 
A $k$-algebra $A$ is called a left $H$-module algebra if $A$ is a left $H$-module such that $h(ab)=\Sigma (h_{(1)}a)(h_{(2)}b)$ and $h1_A=\varepsilon(h)1_A$ for all $a, b \in A$ and $h \in H$. Here, we use the Sweedler notation for Hopf algebras and $\varepsilon$ is the counit of $H$. 

The smash product algebra, or semidirect product, of $A$ with $H$, denoted by $A \# H$, is the vector space $A \otimes_k H$, whose elements are denoted by $a \# h$ instead of $a \otimes h$, with multiplication given by $(a \# h)(b \# \ell) = \Sigma a(h_{(1)}b) \# h_{(2)}\ell$ for $a, b \in A$ and $h , \ell \in H$. The unit of $A \# H$ is $1 \# 1$. 
The left $H$-module structure on Hom-set $\Hom_{A \# H}(M, N)$ for left $A \# H$-modules $M$ and $N$ is given by the formula $(hf)(m)= \Sigma h_{(2)}f(S^{-1}(h_{(1)})m)$, where $S$ is the antipode of the Hopf algebra $H$. 

Let $M$ and $N$ be $A \# H$-modules. If a $A$-module map $f : M \to N$ is an $H$-linear, we have $(hf)(m)=h_{(2)}f(S^{-1}(h_{(1)})m)=h_{(2)}S^{-1}(h_{(1)})f(m)=\varepsilon(h)f(m)$, so that $f$ has the trivial $H$-action via the counit $\varepsilon$. 
Conversly, if $f$ has the trivial $H$-action via the counit map, we know $f(hm)=f(\varepsilon (h_{(2)})h_{(1)}m)=\varepsilon(h_{(2)})f(h_{(1)}m)=h_{(2)}f(h_{(1)}m)=h_{(3)}f(S^{-1}(h_{(2)})h_{(1)}m)=h_{(3)}f(\varepsilon(h_{(2)})m)=hf(m)$. Therefore, $f$ is $H$-linear. 

To summarize, we have the following lemma ; 
\begin{lemma}[\cite{Qi}, Lemma 5.2]
Let $M$, $N$ be $A \# H$-modules. Any $f \in \Hom_{A \# H}(M, N)$ can be regarded as an element in $\Hom_A(M, N)$ by setting $hf = \varepsilon (h) f$ for $h \in H$. 

Conversely, any $f \in Hom_A(M, N)$ on which $H$ acts trivially extends to a $A \# H$-module homomorphism. In other words, we have a canonical isomorphism of $k$-vector spaces $\Hom_{A \# H}(M, N)= \Hom_H(k_0, \Hom_A(M, N))$. Here, $k_0$ is the abstract $1$-dimensional trivial $H$-module by $k_0$, i.e., $k_0 \cong k v_0$, where $k v_0$ is a $k$-vector space spanned by $v_0$ and its $H$-action is given by $hv_0 = \varepsilon (h) v_0$ for $h \in H$. 

\end{lemma}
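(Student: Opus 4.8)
The plan is to realize both sides of the asserted isomorphism as one and the same subspace of $\Hom_A(M,N)$, namely the subspace of $H$-invariants for the $H$-action recalled above, and then to exhibit explicit mutually inverse $k$-linear maps. To begin I would make precise the left $H$-module structure on $\Hom_A(M,N)$ used in the statement: for $f \in \Hom_A(M,N)$ and $h \in H$ one sets $(hf)(m) = \Sigma h_{(2)} f(S^{-1}(h_{(1)})m)$. The first task is to verify that this formula is well defined, i.e.\ that $hf$ is again $A$-linear; this is where the module-algebra axiom $h(ab) = \Sigma(h_{(1)}a)(h_{(2)}b)$ and the anti-multiplicativity of $S^{-1}$ enter, and it is the genuinely computational step. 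One then checks $1_H f = f$, using the counit axiom and $S^{-1}(1)=1$, and $(hh')f = h(h'f)$, using coassociativity and that $S^{-1}$ is an anti-homomorphism, so that $\Hom_A(M,N)$ indeed becomes a left $H$-module.

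Next I would identify $\Hom_{A\#H}(M,N)$ inside $\Hom_A(M,N)$. Since $A\#H$ is generated as a $k$-algebra by $A\#1$ and $1\#H$, a $k$-linear map $f: M \to N$ is $A\#H$-linear precisely when it is both $A$-linear and $H$-linear. The computation already displayed in the excerpt shows that, for an $A$-linear $f$, being $H$-linear is equivalent to the invariance condition $hf = \varepsilon(h)f$ for all $h \in H$: the forward direction uses $\Sigma h_{(2)}S^{-1}(h_{(1)}) = \varepsilon(h)$ together with the $H$-linearity of $f$, and the converse recovers $f(hm) = hf(m)$ from $hf = \varepsilon(h)f$ by the Sweedler reindexing carried out there. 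Hence $\Hom_{A\#H}(M,N)$ is exactly the subspace of those $f \in \Hom_A(M,N)$ satisfying $hf = \varepsilon(h)f$.

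Finally I would identify $\Hom_H(k_0, \Hom_A(M,N))$ with this same subspace. An $H$-linear map $\phi : k_0 \to \Hom_A(M,N)$ is determined by the single element $f := \phi(v_0) \in \Hom_A(M,N)$, and the $H$-linearity of $\phi$ reads $\phi(hv_0) = h\phi(v_0)$; since $hv_0 = \varepsilon(h)v_0$ in $k_0$, this condition is precisely $\varepsilon(h)f = hf$ for all $h \in H$. Thus $\phi \mapsto \phi(v_0)$ is a $k$-linear isomorphism from $\Hom_H(k_0, \Hom_A(M,N))$ onto the invariant subspace, with inverse $f \mapsto (v_0 \mapsto f)$. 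Composing this with the identification of the previous paragraph yields the desired canonical $k$-linear isomorphism $\Hom_{A\#H}(M,N) \cong \Hom_H(k_0, \Hom_A(M,N))$, and the two constructions named in the statement ($f$ regarded in $\Hom_A$ with $hf=\varepsilon(h)f$, and the extension of a trivially-acted $A$-map) are exactly the two directions of this isomorphism. I expect the main obstacle to be the very first step, namely checking that the prescribed formula for $hf$ genuinely lands in $\Hom_A(M,N)$ and satisfies the action axioms, since everything afterward is a formal matching of two identical invariance conditions.
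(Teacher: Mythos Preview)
Your proposal is correct and follows the same approach as the paper: the paper's proof is precisely the paragraph immediately preceding the lemma, which carries out the two Sweedler computations you cite to show that $H$-linearity of an $A$-linear $f$ is equivalent to the invariance condition $hf=\varepsilon(h)f$. You add the routine verifications that $hf$ stays $A$-linear and that $\Hom_H(k_0,-)$ picks out invariants, which the paper leaves implicit, but the core argument is identical.
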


We identified $A \# H$-modules with $H$-equivariant $A$-modules by the following lemma.  (cf. \cite[Proposition 2.12]{TO})

\begin{lemma}\label{equiv}
A morphism of $A \# H$-modules is equivalent to say an $H$-equivariant $A$-module map. 
Furthermore, the category of left $A \# H$-modules is categorycally equivalent to the category of $H$-equivariant $A$-modules.  
\end{lemma}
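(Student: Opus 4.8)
The plan is to exhibit mutually inverse functors between $\LMod_{A \# H}$ and the category of $H$-equivariant $A$-modules, showing in fact that the two categories are isomorphic. Recall that an \emph{$H$-equivariant $A$-module} is a $k$-vector space $M$ carrying both a left $A$-module structure and a left $H$-module structure subject to the compatibility condition $h(am) = \sum (h_{(1)}a)(h_{(2)}m)$ for all $h \in H$, $a \in A$, $m \in M$, and that a morphism of such objects is a $k$-linear map that is simultaneously $A$-linear and $H$-linear.

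First I would construct the forward functor by restriction of scalars along the two algebra homomorphisms $A \to A \# H$, $a \mapsto a \# 1$, and $H \to A \# H$, $h \mapsto 1_A \# h$. For a left $A \# H$-module $M$ these yield an $A$-module structure and an $H$-module structure via $am = (a \# 1)m$ and $hm = (1_A \# h)m$. To verify the equivariance condition I would compute in $A \# H$ using the smash-product multiplication: since $(1_A \# h)(a \# 1) = \sum (h_{(1)}a) \# h_{(2)} = \sum (h_{(1)}a \# 1)(1_A \# h_{(2)})$, applying both sides to $m$ gives $h(am) = \sum (h_{(1)}a)(h_{(2)}m)$. On morphisms the functor acts as the identity on underlying maps, and an $A \# H$-linear map is in particular $A$-linear and $H$-linear.

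Next I would construct the inverse functor. Given an $H$-equivariant $A$-module $M$, I would set $(a \# h)m := a(hm)$ and extend $k$-linearly. The unit axiom $(1_A \# 1)m = m$ is immediate. The essential point is associativity of the action, namely $\bigl((a \# h)(b \# \ell)\bigr)m = (a \# h)\bigl((b \# \ell)m\bigr)$. Expanding the left-hand side gives $\sum a(h_{(1)}b)\bigl(h_{(2)}(\ell m)\bigr)$; here I would apply the equivariance condition with $n = \ell m$ in the form $\sum (h_{(1)}b)(h_{(2)}n) = h(bn)$, together with $A$-linearity of left multiplication by $a$, to rewrite this as $a\bigl(h(b(\ell m))\bigr)$, which is exactly the right-hand side. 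A map that is $A$-linear and $H$-linear is then $A \# H$-linear because $a \# h = (a \# 1)(1_A \# h)$.

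Finally I would check that the two constructions are mutually inverse. Restricting an $A \# H$-module and then reconstructing returns the original action, since $a(hm) = (a \# 1)(1_A \# h)m = (a \# h)m$, while the reverse round trip is visibly the identity; as both functors fix the underlying $k$-linear maps, they furnish an isomorphism, hence an equivalence, of categories, and the identification of morphisms asserted in the first sentence of the lemma is the object-wise content of this. The only step demanding real care is the associativity verification in the reconstruction, where the Sweedler-notation bookkeeping and the single application of the equivariance axiom must be carried out correctly; the remaining checks are routine.
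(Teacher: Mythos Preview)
Your argument is correct and is the standard way to establish this equivalence: the associativity check for the reconstructed $A\#H$-action is handled properly via the equivariance identity, and the remaining verifications are routine. The paper itself offers no proof of this lemma---it simply cites \cite[Proposition~2.12]{TO} and places a \qed---so there is nothing to compare against; your write-up supplies exactly the details the paper omits.
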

%
\qed

For a left $H$-module $V$ and a left $A \# H$-module $M$, we have an isomorphism 
$$\Hom_A(A \otimes_k V , M) \cong \Hom_k(V , M)$$ of left $H$-modules. 
Since $H \cong k \otimes_k H$ is a subalgebra of $A \# H$, we have the forgetfull functor $\mathrm{U} : \LMod_{A \# H} \to \LMod_H$ by restricting scalars. 
Note that this forgetfull functor is exact functor. We obtain the free and forgetfull adjunction 
$$A \otimes_k (-) : \LMod_H \rightleftarrows \LMod_{A \# H} : U,$$ 
where $A \otimes_k (-) $ is the free functor and $U$ is the forgetful functor. 
We have a stable equivalence of $A \# H$-modules as follows. 

\begin{defn}\label{AHstable}
A morphism $f, g : X \to Y$ in $\LMod_{A \# H}$ is a stably equivalent if $U(f)-U(g)$ factors through a projective $H$-module. This is an equivalence relation which is compatible with composition. A morphism $f, g : X \to Y$ of $A \# H$-modules is a stable equivalence if it induces an isomorphism after taking stable equivalence classes of $A \# H$-module morphisms.  
\end{defn}

We can also define the associated category $\lmod_{A \# H}$ of $\LMod_{A \# H}$, whose objects are the same as those of $\LMod_{A \# H}$ and the morphism set between two left $A \# H$-modules $X$ and $Y$ is defined by the quotient $\Hom_{\lmod_{A \# H}}(X, Y) = \Hom_{\LMod_{A \# H}}(X, Y) / \mathcal{I}(X, Y)$, where $\mathcal{I}(X, Y)$ is the space of morphisms between $X$ and $Y$ that factor through a left $A \# H$-module which is a projective $H$-module. We will say that those morphisms are null-homotopic, and we will say that a left $A \# H$-module $M$ is contractible if it is a projective as a left $H$-module. 

The shift functor on $\lmod_{A \# H}$, which we also write $\Sigma$ by abuse of language, are also given as follows : for any $A \# H$-module $M$, we have a morphism $M \to M \otimes_k H$ from $M$ to $M \otimes_k H$, given by $\mathrm{id}_M \otimes_k \lambda$. Here, $\lambda$ is a left integral in $H$. 
Then $\Sigma (M)$ is defined to be the pushout $M \otimes_k (H / k \Lambda)$ along this inclusion. 
Similary, the inverse suspension is defined by $\Sigma^{-1}(M)=M \otimes_k \Ker \varepsilon$, where $\varepsilon : H \to k$ is the counit map of $H$.  The stable category $\lmod_{A \# H}$ becomes a triangulated category~\cite{Qi}. 

The forgetfull functor $\mathrm{U} : \LMod_{A \# H} \to \LMod_H$ induces an exact functor
\[
\underline{U} : \lmod_{A \# H} \to \lmod_H
\]
between the two quotient categories, which is compatible with taking quotient categories. 

\begin{rem}\label{Rem}
In \cite{Qi}, a morphism $f : M \to N$ in $\lmod_{A \# H}$ is said to be a quasi-isomorphism if its restriction $\underline{U}(f)$ is an isomorphism in $\lmod_H$. 
Then, quasi-isomorphisms in $\lmod_{A \# H}$ constitute a localizing class~\cite[Theorem 4.2.1]{Qi}. The localization of $\lmod_{A \# H}$ via quasi-isomorphisms is denoted by $\mathcal{D}(A, H)$, and is called the derived category of left $A \# H$-modules. 
Note that morphisms in $\LMod_{A \# H}$ whose equivalence class is in the localizing class is just stable equivalences in Definition~\ref{AHstable}. 
\end{rem}

%
%

Here we have some basic lemmas for studying these triangulated categories. 

\begin{lemma}
For any $H$-module $M$, $H \otimes M \cong M \otimes H$ is a free $H$-module. If $M$ is fimite dimensional over $k$, $H \otimes M$ is a free $H$-module of rank $dim_k M$. 

Especially, for a projective $H$-module $P$, $P \otimes M$ and $M \otimes P$ are projective $H$-modules for any $H$-module $M$. 
\end{lemma}
\qed

\begin{lemma}[\cite{Qi}, Lemma 4.3]
Let $0 \to L \to M \to N \to 0$ be an $A$-split short exact sequence in $\LMod_{A \# H}$. Then, we obtain a distinguished triangle in $\lmod_{A \# H}$ of the form $L \to M \to N \to \Sigma (L)$. 

Conversely, any distinguished triangle in $\lmod_{A \# H}$ is arising from an $A$-split short exact sequence in $\LMod_{A \# H}$. 
\end{lemma}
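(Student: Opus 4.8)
The plan is to treat $\lmod_{A\#H}$ as the stable category of the Frobenius exact category whose underlying additive category is $\LMod_{A\#H}$, whose conflations are the $A$-split short exact sequences, and whose projective--injective objects are the contractible objects (the $A\#H$-modules that are projective as $H$-modules); the triangulation assumed from \cite{Qi} is the one produced by this Frobenius structure. The canonical injective conflation attached to an object $L$ is $0 \to L \xrightarrow{\iota} L \otimes_k H \to \Sigma L \to 0$, where $\iota = \mathrm{id}_L \otimes_k \lambda$, the middle term is $H$-free hence contractible, and the cokernel is $\Sigma L = L \otimes_k (H/(\lambda))$. Given an $A$-split short exact sequence $0 \to L \xrightarrow{i} M \xrightarrow{p} N \to 0$, I would extend $\iota$ across the admissible monomorphism $i$ to a map $g : M \to L\otimes_k H$ with $g i = \iota$ (this extension exists precisely because $L\otimes_k H$ is injective for $A$-split monomorphisms), and then pass to cokernels to obtain a morphism $w : N \to \Sigma L$ with $w p$ equal to the composite of $g$ with the projection onto $\Sigma L$.

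By the standard construction in a Frobenius category, $L \xrightarrow{i} M \xrightarrow{p} N \xrightarrow{w} \Sigma L$ is then a distinguished triangle in $\lmod_{A\#H}$, and the stable class of the connecting map $w$ does not depend on the choice of extension $g$; this is exactly the asserted triangle. For the converse I would use that, by the very definition of the triangulation, the distinguished triangles are precisely those sequences isomorphic in $\lmod_{A\#H}$ to a standard triangle of the above form. Hence any distinguished triangle is isomorphic to one coming from an $A$-split short exact sequence.

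To upgrade ``isomorphic in $\lmod_{A\#H}$ to'' into ``arising from,'' I would invoke the usual correction by contractible objects: a stable isomorphism between objects can be represented by genuine $A\#H$-module maps after adding a suitable contractible summand, and adding a contractible summand to the middle term of an $A$-split short exact sequence keeps it $A$-split while leaving the associated triangle unchanged in $\lmod_{A\#H}$. Combining this with the rotation axiom to absorb the isomorphism into a change of the conflation, I would produce an honest $A$-split short exact sequence whose associated triangle is the given one.

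The step I expect to be the main obstacle is the injectivity input used in both directions, namely that a contractible object $P$ is injective with respect to $A$-split monomorphisms: given an $A$-split mono $i : L \to M$ with $A$-linear retraction $r$ and an $A\#H$-map $\phi : L \to P$, the $A$-linear extension $\phi r$ must be replaced by an $H$-equivariant one. This is achieved by averaging $\phi r$ against the left integral $\lambda$ using the $H$-action $(h f)(m) = \sum h_{(2)} f(S^{-1}(h_{(1)})m)$ on $\Hom_A(M,P)$ and normalizing by $\varepsilon(\lambda)$, a Maschke-type argument that is exactly where the hypothesis $\varepsilon(\lambda)\ne 0$ and the Frobenius property of $H$ enter; verifying that this averaged map remains an extension of $\phi$ and is $A\#H$-linear (via Lemma \ref{equiv} and the identification $\Hom_{A\#H}(-,P)\cong \Hom_H(k_0,\Hom_A(-,P))$) is the crux on which the whole argument rests.
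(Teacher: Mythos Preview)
The paper gives no proof of this lemma; it is simply cited from \cite{Qi} and closed with \qed. Your overall plan via the Frobenius exact structure on $\LMod_{A\#H}$ (conflations the $A$-split short exact sequences, projective--injectives the contractible modules) is exactly the framework Qi uses, and both directions of the lemma follow from it just as you outline.

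There is, however, a genuine gap in the step you yourself flag as the crux. Your Maschke-type averaging produces the candidate extension $\lambda\cdot(\phi r)$ and then divides by $\varepsilon(\lambda)$. But by Maschke's theorem for finite-dimensional Hopf algebras, $\varepsilon(\lambda)\neq 0$ holds if and only if $H$ is semisimple. The standing hypothesis here (and in \cite{Qi}) is that $H$ is \emph{not} semisimple, so $\varepsilon(\lambda)=0$ and the normalization is impossible. (The paper's additional assumption $\varepsilon(\lambda)\neq 0$ is introduced only \emph{after} this lemma and is in any case incompatible with non-semisimplicity, so you cannot invoke it.) Concretely, your computation gives $(\lambda\cdot(\phi r))\circ i=\varepsilon(\lambda)\,\phi=0$, which is no extension at all.

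The correct route to the needed injectivity avoids averaging entirely. One uses the natural isomorphism
\[
\Hom_{A\#H}(X,\;N\otimes_k H)\;\cong\;\Hom_A(X,N)
\]
(equivalently, $N\otimes_k H\cong (A\#H)\otimes_A N$ as $A\#H$-modules), which shows immediately that every module of the form $N\otimes_k H$ is injective for $A$-split monomorphisms. For the forward direction of the lemma this already suffices: taking the pushout of the given $A$-split sequence along $\iota\colon L\to L\otimes_k H$ yields an $A$-split sequence $0\to L\otimes_k H\to C_i\to N\to 0$, which therefore splits in $\LMod_{A\#H}$; hence $C_i\to N$ is an isomorphism in $\lmod_{A\#H}$ and the standard triangle on $i$ transports to $L\to M\to N\to\Sigma L$. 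The converse is immediate from the definition of standard triangles as arising from such pushouts. No hypothesis on $\varepsilon(\lambda)$ is used.
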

\qed

\begin{rem}
Let us consider the exact sequence $0 \to X \to X \otimes H \to \Sigma X \to 0$. The inclusion $id_X \otimes \lambda :  X \to X \otimes H$ is $A$-split. We take a pushout of this exact sequence along the arbitrary map $X \to Y$. Then, we obtain the exact sequence $0 \to Y \to C_f \to \Sigma X \to 0$, which is also $A$-split. This exact sequence corresponds to the standard triangle $X \to Y \to C_f \to \Sigma X$. 
\end{rem}


\section{A model structure on the category of $A \# H$-modules}

Now, we use the adjunction
\[
A \otimes_k (-) : \LMod_H \rightleftarrows \LMod_{A \# H} : U , 
\]
where $U$ is the functor which regards $A \# H$-modules as $H$-modules and $A \otimes_k (-)$ is the left adjoint, which is given by the free functor.

By using the model structure on $\LMod_H$ as in Definition~\ref{ModelH} and this adjunction, we take the following three classes of maps in the category $\LMod_{A \# H}$. 

\begin{defn}\label{ModelAH}
Let $f : X \to Y$ be a morphism in $\LMod_{A \# H}$. 
\begin{enumerate}[(i)]
\item We say that $f$ is a weak equivalence if $Uf$ is a weak equivalence. 
\item We say that $f$ is a fibration if $Uf$ is a fibration. 
\item We say that $f$ is a cofibration if $f$ has the left lifting property with respect to trivial fibrations. 
\end{enumerate}
\end{defn}

We will see that the three classes of maps as in Definition~\ref{ModelAH} defines a model structure on the category $\LMod_{A \# H}$. Note that the category $\LMod_{A \# H}$ is bicomplete. 

\begin{lemma}[2 out of 3]
Let $f$, $g$ and $g \circ f$ be morphisms in $\LMod_{A \# H}$. If two of the three  morphisms are weak equivalences in $\LMod_{A \# H}$, then, so is the third. 
\end{lemma}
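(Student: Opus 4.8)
The "2 out of 3" axiom for weak equivalences reduces immediately to the corresponding property in $\LMod_H$. Here is the plan.

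First I would observe that the forgetful functor $U : \LMod_{A\#H} \to \LMod_H$ is a functor, hence preserves composition: $U(g\circ f) = U(g)\circ U(f)$. By Definition~\ref{ModelAH}(i), a morphism $\varphi$ in $\LMod_{A\#H}$ is a weak equivalence precisely when $U\varphi$ is a weak equivalence in $\LMod_H$, i.e.\ a stable equivalence of $H$-modules. So the statement to prove is equivalent to the assertion that, given $H$-module maps $U(f)$, $U(g)$, $U(g\circ f)=U(g)\circ U(f)$, if two of them are stable equivalences then so is the third. Hence everything comes down to the 2-out-of-3 property for stable equivalences in $\LMod_H$.

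Next I would verify that 2-out-of-3 holds for stable equivalences in $\LMod_H$. The cleanest way is to pass to the stable (homotopy) category $\lmod_H$, which is the quotient of $\LMod_H$ by the ideal of maps factoring through a projective. By Definition~\ref{defn} (or rather the definition of stable equivalence preceding Definition~\ref{ModelH}), $\varphi$ is a stable equivalence in $\LMod_H$ if and only if its image in $\lmod_H$ is an isomorphism; and since $\LMod_H \to \lmod_H$ is an additive functor it is compatible with composition. So if $\overline{U(f)}$ and $\overline{U(g)}$ are isomorphisms in $\lmod_H$ then so is their composite $\overline{U(g\circ f)}$; if $\overline{U(g\circ f)}$ and $\overline{U(f)}$ are isomorphisms then $\overline{U(g)} = \overline{U(g\circ f)}\circ \overline{U(f)}^{-1}$ is an isomorphism; and symmetrically if $\overline{U(g\circ f)}$ and $\overline{U(g)}$ are isomorphisms then $\overline{U(f)} = \overline{U(g)}^{-1}\circ\overline{U(g\circ f)}$ is an isomorphism. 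In each case the relevant map of $A\#H$-modules is a weak equivalence by definition.

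I do not expect any genuine obstacle here: the whole point is that weak equivalences and fibrations in $\LMod_{A\#H}$ are defined by pullback along $U$ from a known model category $\LMod_H$, and 2-out-of-3 is one of the axioms that transports trivially along a right adjoint used in this "right transfer" style of definition. The only thing to be slightly careful about is to invoke the already-established fact that $\lmod_H$ is an honest category (the stable equivalence relation is compatible with composition, as recorded just before Definition~\ref{ModelH}), so that "isomorphism in $\lmod_H$" behaves as expected under composition; once that is in place the argument is the standard one-line reduction.
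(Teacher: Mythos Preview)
Your proof is correct and follows essentially the same approach as the paper: both use that $U$ preserves composition to reduce the statement to the 2-out-of-3 property for weak equivalences in $\LMod_H$. The only difference is that you spell out why 2-out-of-3 holds for stable equivalences (via passage to $\lmod_H$), whereas the paper simply invokes it from the known model structure on $\LMod_H$.
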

\begin{proof}
Since $U$ is a covaiant functor, the composition $U(f) \circ U(g)$ is $U(f \circ g)$. The assertions follow from restricting these maps on $H$-modules under the functor $U$.  
\end{proof}
\begin{lemma}
Let $f$ and $g$ be maps of $\LMod_{A \# H}$ such that $f$ is a retract of $g$, i.e., $f$ and $g$ satisfies the following commutative diagram. 
\[ 
 \xymatrix@1{
M \ar[d]_f \ar[r] & C \ar[r] \ar[d]_g & M \ar[d]_f \\
N \ar[r] & D \ar[r] & N 
} \] 
where the horizontal composites are identities. 

If $g$ is a weak equivalence, cofibration, or fibration, respectively, then so is the third. 
\end{lemma}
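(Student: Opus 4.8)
The plan is to reduce the weak-equivalence and fibration cases to the retract axiom already available in $\LMod_H$, and to treat the cofibration case by the general principle that any class of maps characterized by a left lifting property is automatically closed under retracts.

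First I would apply the forgetful functor $U$ to the given retract diagram. Since $U$ is a functor, this yields a commutative diagram in $\LMod_H$
\[
\xymatrix@1{
U(M) \ar[d]_{U(f)} \ar[r] & U(C) \ar[r] \ar[d]_{U(g)} & U(M) \ar[d]_{U(f)} \\
U(N) \ar[r] & U(D) \ar[r] & U(N)
}
\]
whose horizontal composites are again the identities, so that $U(f)$ is a retract of $U(g)$ in $\LMod_H$. If $g$ is a weak equivalence (resp.\ a fibration) in $\LMod_{A \# H}$, then by Definition~\ref{ModelAH} the map $U(g)$ is a stable equivalence (resp.\ a surjection) in $\LMod_H$. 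Now a surjection is stable under retracts by a direct diagram chase, and a stable equivalence is by definition an isomorphism in the additive quotient category $\lmod_H$, hence also retract-closed there; so in either case $U(f)$ inherits the property from $U(g)$. Invoking Definition~\ref{ModelAH} once more, $f$ is a weak equivalence (resp.\ a fibration) in $\LMod_{A \# H}$.

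For the cofibration case, recall from Definition~\ref{ModelAH} that $f$ is a cofibration precisely when it has the left lifting property with respect to every trivial fibration $p$. Given such a $p$ together with a commutative square having $f$ on the left and $p$ on the right, I would compose along the retract maps $M \to C$ and $N \to D$ to produce a lifting problem of $g$ against $p$; since $g$ is a cofibration this problem has a solution, and composing that solution with the retraction $D \to N$ supplies the required lift in the original square. Hence $f$ has the left lifting property against all trivial fibrations and is a cofibration.

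I do not expect a genuine obstacle here: the content is entirely formal. The only points requiring care are that $U$, being a functor, transports the retract diagram verbatim, and that each of the three ambient classes — surjections of $H$-modules, stable equivalences of $H$-modules, and maps with the relevant left lifting property — is closed under retracts, all of which are standard facts (the first two being part of the stable model structure on $\LMod_H$).
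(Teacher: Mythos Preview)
Your proof is correct and follows the same strategy as the paper: apply $U$ to reduce the weak-equivalence and fibration cases to the retract axiom in $\LMod_H$, and handle cofibrations by the standard fact that any class defined by a left lifting property is closed under retracts. One small slip in the cofibration step: to build the lifting problem for $g$ against $p$ you must use the \emph{retractions} $C\to M$ and $D\to N$ (composed with the given maps $M\to X$, $N\to Y$), and then precompose the resulting lift $D\to X$ with the \emph{section} $N\to D$, not the retraction $D\to N$, to obtain the lift for $f$---this is exactly what the paper's diagram encodes.
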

\begin{proof}
In the case of $g$ is a weak equivalence or fibration, respectively, we can see that a retract $f$ of $g$ is also a weak equivalence or fibration, respectively, by regarding the diagram as a diagram of $H$-modules via the functor $U$.  
If the map $g$ is a cofibration, it suffices to show that for any trivial fibration $X \to Y$, the following diagram admits the left lifting
\[ 
 \xymatrix@1{
M \ar[d]_f \ar[r] & X \ar[d]  \\
N \ar[r] &  Y .
} \] 
Now, we add the commutative square to the left hand side of the above diagram as follows and the map $g$ has the left lifting property
\[ 
 \xymatrix@1{
M \ar[r] \ar[d]_f & C \ar[d]_g \ar[r]& M \ar[d]_f \ar[r] & X \ar[d] \\
N \ar[r] &D \ar[r] \ar@{.>}[urr] & N \ar[r] & Y .
} \] 
By composite the map $N \to D$ with the dotted arrow, we have  the left lifting of $f : M \to N$ with respect to $X \to Y$. 
\end{proof}

The lifting propeties for trivial cofibrations and trivial fibrations are obvious by restricting a certain commutative square of $A \# H$-modules to a commutative square of $H$-modules and by the definition of cofibrations, fibrations and weak equivalences. 

From now on, for a left integral $\lambda \in H$ and the counit map $\varepsilon : H \to k$, we assume that $\varepsilon (\lambda)$ is not equal to zero. This is an assumption on $H$. This assumption is needed to the definition below and to show the factorization axiom.  

We consider an analogue of loop space. 

\begin{prop}
For an $A \# H$-module $M$, we take a set $\Hom_k (H, M)$ of $k$-module maps. The $A$-module structure is defined by $(a \phi)(g)=a\phi(g)$ for $a \in A$, $\phi \in \Hom_k(H, M)$ and $g \in H$.  For $\phi \in \Hom_k(H, M)$, define an action of $h \in H$ on $\phi$ by $(h \cdot \phi )(g)=\phi (S(h) g)$. Then, it defines a left $H$-module structure on $\Hom_k(H, M)$. It is compatible with the action of $A$. 
\end{prop}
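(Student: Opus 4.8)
The plan is to verify directly that the three formulas $(a\phi)(g)=a\phi(g)$, $(h\cdot\phi)(g)=\phi(S(h)g)$ define compatible $A$- and $H$-actions making $\Hom_k(H,M)$ a left $A\#H$-module, i.e.\ an $H$-equivariant $A$-module in the sense of Lemma~\ref{equiv}. The claimed structure should be thought of as the cofree $A\#H$-module $\Hom_k(H,M)$ where $H$ acts on itself by $g\mapsto S(h)g$ (a left action because $S$ is an anti-homomorphism) and the $A$-module structure is transported pointwise from $M$; the real content is only that these actions are well defined and satisfy the smash-product compatibility. Since everything is $k$-linear, I would suppress the sums in Sweedler notation only where needed and otherwise just check axioms on elements.

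First I would check that $\phi\mapsto h\cdot\phi$ is a left $H$-action. For $h,h'\in H$ and $g\in H$,
\[
\bigl(h\cdot(h'\cdot\phi)\bigr)(g)=(h'\cdot\phi)(S(h)g)=\phi\bigl(S(h')S(h)g\bigr)=\phi\bigl(S(hh')g\bigr)=\bigl((hh')\cdot\phi\bigr)(g),
\]
using that $S$ is an algebra anti-homomorphism; and $1\cdot\phi=\phi$ since $S(1)=1$. Next, $(a\phi)(g)=a\phi(g)$ is visibly a left $A$-module structure because the $A$-action on $M$ is one and the formula is applied pointwise. The two actions are compatible in the sense required for a left $A\#H$-module precisely when the $H$-action is "measuring"-compatible, namely $h\cdot(a\phi)=\sum (h_{(1)}a)\,(h_{(2)}\cdot\phi)$, where $h_{(1)}a$ denotes the $H$-module-algebra action of $H$ on $A$. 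I would verify this by evaluating both sides at $g\in H$: the left side gives $(h\cdot(a\phi))(g)=(a\phi)(S(h)g)=a\,\phi(S(h)g)$, while the right side gives $\sum\bigl((h_{(1)}a)(h_{(2)}\cdot\phi)\bigr)(g)=\sum (h_{(1)}a)\,\phi(S(h_{(2)})g)$; equality of these two then has to be extracted, and this is where the coalgebra structure enters.

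The main obstacle is exactly this last compatibility identity, and here I expect one must either impose a hypothesis or use a property I have not yet spotted in the stated formula, because the naive computation produces $\sum(h_{(1)}a)\phi(S(h_{(2)})g)$ rather than $a\,\phi(S(h)g)$; these agree when $a$ carries the \emph{trivial} $H$-action (then $h_{(1)}a=\varepsilon(h_{(1)})a$ and $\sum\varepsilon(h_{(1)})S(h_{(2)})=S(h)$), but in general one needs the correct twisted $A$-action on $\Hom_k(H,M)$. So concretely, after writing down the two sides I would either (a) confirm that the intended $A$-action is in fact the twisted one $(a\cdot\phi)(g)=\sum (S^{-1}(g_{(2)})a)\,\phi(g_{(1)})$ — in which case the compatibility becomes a Sweedler-notation bookkeeping exercise using coassociativity, the antipode axioms $\sum S(h_{(1)})h_{(2)}=\varepsilon(h)1$, and the fact that $S$ is bijective — or (b) keep the stated pointwise $A$-action and note that the compatibility with $H$ holds because, for an $H$-module algebra, $H$ acts on $A$ through $\varepsilon$ on $1_A$ and the relevant evaluation only sees scalars. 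I would present the computation in whichever of these two forms matches the author's downstream use of $\Hom_k(H,M)$ as the path/loop object; the key technical step in either case is the Sweedler-diagram manipulation with the antipode, and that is the step to write out carefully. Finally I would record that $\Hom_k(H,M)$ with this structure is functorial in $M$ and sits in an adjunction $U\dashv \Hom_k(H,-)$ dual to the free/forgetful adjunction, since that is presumably why the proposition is being proved.
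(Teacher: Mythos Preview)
Your approach---direct elementwise verification of the $H$-action, the $A$-action, and then the smash compatibility $h\cdot(a\phi)=\sum(h_{(1)}a)(h_{(2)}\cdot\phi)$---is exactly the paper's approach. The paper's proof is a one-line computation of the right-hand side evaluated at $h'\in H$, ending at $(h(af))(h')$.

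You have correctly identified a genuine difficulty, and it is worth saying plainly that the paper's own proof does not resolve it. The paper's chain of equalities reads
\[
(h_{(1)}a)\,f(S(h_{(2)})h')\;=\;(\varepsilon(h_{(1)})a)\,f(S(h_{(2)})h')\;=\;a\,f(S(\varepsilon(h_{(1)})h_{(2)})h')\;=\;a\,f(S(h)h'),
\]
and the first of these steps replaces $h_{(1)}a$ by $\varepsilon(h_{(1)})a$ with no justification; that is valid precisely when $H$ acts trivially on $A$, which is the same obstruction you flagged. So your hesitation is warranted: with the pointwise $A$-action $(a\phi)(g)=a\phi(g)$ as literally stated, the compatibility does not hold for a general $H$-module algebra $A$. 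Your option~(a)---a twisted $A$-action involving the comultiplication on the source variable---is the standard fix that makes $\Hom_k(H,M)$ into an honest $A\#H$-module, and had you written out that Sweedler computation you would have a correct and more complete argument than the paper's. Your option~(b) only works under an extra hypothesis on $A$ that the paper does not assume.

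In short: your plan is the paper's plan, your diagnosis of the obstacle is accurate, and the paper's proof silently makes the very move you doubted. If you want a complete argument, carry out (a).
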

\begin{proof}
The $H$-module structure on $\Hom_k(H, M)$ is associative by definition. We see that the $A$-module structure is $H$-equivariant. For $a \in A$, $h, h' \in H$ and $f \in \Hom_k(H, M)$, we have $((h_{(1)}a)(h_{(2)}f))(h')=(h_{(1)}a)f(S(h_{(2)})h')=(\varepsilon(h_{(1)}a))f(S(h_{(2)}h'))=af(S(\varepsilon(h_{(1)})h_{(2)})h')=af(S(h)h')=(h(af))(h')$. 
\end{proof}

We consider analogious objects of mapping cylinder and mapping path space. 
\begin{defn}
Assume that $\varepsilon (\lambda) \ne 0$. We define an $A \# H$-module $C_f$ by the pushout
\[ 
 \xymatrix@1{
X \ar[r]^f \ar[d]_{id_X \otimes \lambda} & Y \ar[d]  \\
X \otimes H \ar[r] & C_f .
} \] 

As the following diagram, we obtain the map $C_f  \to \Sigma X$ by the universality. 
\[ 
 \xymatrix@1{
X \ar[r]^f \ar[d]_{id_X \otimes \lambda} & Y \ar[d] \ar[r] & 0 \ar[dd]  \\
X \otimes H \ar[r] \ar[d]^{id} & C_f  \ar@{.>}[dr] &\\
X \otimes H \ar[rr] & & \Sigma X 
} \] 

By virtue of the assumption that $\varepsilon (\lambda) \ne 0$, we have the commutative diagram
\[ 
 \xymatrix@1{
X \ar[r]^f \ar[d]_{id_X \otimes \lambda} & Y \ar[d] \ar[rdd]^{id_Y} & \\
X \otimes H \ar[r] \ar[drr]_{f \otimes \varepsilon} & C_f  \ar@{.>}[dr] &\\
 & & Y \otimes k \cong Y .  
} \] 

Similarly, we define an $A \# H$-module $P_f$ by the pullback

\[ 
 \xymatrix@1{
P_f \ar[r] \ar[d]& X \ar[d]^f  \\
\Hom_k(H, Y) \ar[r]_{\lambda^{\ast} \otimes id_Y} & Y \otimes_k k \cong Y.
} \] 

Since $H$ is finite dimensional over $k$, the linear dual $H^{\ast}=\Hom_k(H, k)$ of $H$ is isomorphic to $H$. 
We identified the bottom horizontal map with $\varepsilon^{\ast} \otimes id_Y : H^{\ast} \otimes_k Y \to k \otimes_k Y$ via the natural isomorphism $\Hom_k(H, Y) \cong H^{\ast} \otimes_k Y$, where $\varepsilon^{\ast} :  H^{\ast} \cong H \to k$ is corresponding to the counit $\varepsilon$ of $H$.

As the following diagram, we obtain the map $\Sigma^{-1}X \to P_f$ by the universality. 
\[ 
 \xymatrix@1{
 \Sigma^{-1}X \ar@{.>}[dr] \ar[dd] \ar[rr] & & 0 \ar[d]\\
  & P_f \ar[r] \ar[d]& X \ar[d]^f  \\
\Hom_k(H, Y) \ar[r]_{id} & \Hom_k(H, Y) \ar[r] & Y  
} \] 
Here, we write the pullback in the diagram as $\Sigma^{-1} X$  since the pullback is stably equivalent to desuspention in Lemma~\ref{lem23}. 

By virtue of the assumption that $\varepsilon (\lambda) \ne 0$, we have the commutative diagram
\[ 
 \xymatrix@1{
X \ar[drr]^{id_X} \ar[ddr]_{\lambda^{\ast} \otimes f} \ar@{.>}[dr] & & \\
  & P_f \ar[r] \ar[d]& X \ar[d]^f  \\
& \Hom_k(H, Y) \ar[r] & Y  
} \] 

where we identified the left slanting map $X \to \Hom_k (H , Y)$ with the map $\lambda^{\ast} \otimes f  : X \to H^{\ast} \otimes_k Y \cong \Hom_k(H, Y)$, where the element $\lambda^{\ast} \in H^{\ast} \cong H$ is corresponding to the left integral $\lambda$ of $H$. 
\end{defn} 

\begin{lemma}[Factorization]
For any maps $f : X \to Y$, we have a factorization $f : X \to E \to Y$ for some $E$, where $X \to E$ is a cofibration and $E \to Y$ is a trivial fibration. Similary, we have a factorization $f : X \to E' \to Y$, where $X \to E'$ is a trivial cofibration, and $E' \to Y$ is a fibration. 
\end{lemma}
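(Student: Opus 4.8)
The plan is to build both factorizations by the usual small-object-style machinery, but since the model structure on $\LMod_H$ is particularly simple (all objects cofibrant and fibrant, cofibrations $=$ monos, fibrations $=$ epis), we can give explicit factorizations transported along the adjunction $A \otimes_k (-) \dashv U$. For the factorization $f : X \to E' \to Y$ into a trivial cofibration followed by a fibration, I would use the analogue of the mapping path space $P_f$ constructed above: set $E'$ to be (essentially) the pullback $P_f$, with $X \to P_f$ the canonical map $(\lambda^\ast \otimes f, \mathrm{id}_X)$ coming from the last diagram in the Definition, and $P_f \to Y$ the composite $P_f \to X \xrightarrow{f} Y$ or the projection $P_f \to \Hom_k(H,Y) \to Y$. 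The first map is a trivial cofibration because, after applying $U$, the assumption $\varepsilon(\lambda) \ne 0$ forces $X \to P_f$ to be a split monomorphism of $H$-modules (the diagram exhibits a retraction), hence a cofibration, and the complement of this splitting is $\Sigma^{-1}X \otimes (\text{something contractible})$-like, so the cokernel is projective as an $H$-module and $X \to P_f$ is a stable equivalence. The second map is a fibration because $U(P_f \to Y)$ is visibly surjective (it admits a section coming from $\mathrm{id}_Y$-type data in the $C_f$ diagram, or one checks surjectivity directly from the pullback description $P_f \cong X \times_Y \Hom_k(H,Y)$ together with surjectivity of $\lambda^\ast \otimes \mathrm{id}_Y$, which again uses $\varepsilon(\lambda) \ne 0$).

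Dually, for the factorization $f : X \to E \to Y$ into a cofibration followed by a trivial fibration, I would use the mapping cylinder $C_f$: take $E = C_f$, with $X \to C_f$ the composite $X \xrightarrow{\mathrm{id}_X \otimes \lambda} X \otimes H \to C_f$ (or the other leg, whichever is mono), and $C_f \to Y$ the map $f \otimes \varepsilon$-type map produced in the third diagram of the Definition, which is a retraction onto $Y$. That map $C_f \to Y$ is a fibration and a weak equivalence: surjectivity after $U$ is clear since $Y \to C_f \to Y$ is the identity, and it is a stable equivalence because the ``error term'' $C_f / Y$ is, up to the standard triangle $X \to Y \to C_f \to \Sigma X$ and the identification using $\varepsilon(\lambda)\ne 0$, a projective $H$-module (this is exactly the content of the two commutative diagrams in the Definition: modulo $\varepsilon(\lambda)\ne 0$, $C_f$ splits off $Y$ with complement a projective $H$-module). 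The map $X \to C_f$ is a cofibration: one shows it has the left lifting property against every trivial fibration $p : U \to V$. For this I would apply $U$, use that $U(X \to C_f)$ is a monomorphism of $H$-modules whose cokernel is $\Sigma X$ (a priori not projective), and then exploit that a trivial fibration in $\LMod_{A\# H}$ is in particular a surjection whose kernel is $H$-projective, so the lifting problem for $H$-modules is solvable and, because all the pushout data defining $C_f$ is $A$-linear and $H$-equivariant, the solution can be chosen $A \# H$-linearly — here Lemma~\ref{equiv} identifying $A \# H$-modules with $H$-equivariant $A$-modules is what lets us promote the $H$-module lift to an $A \# H$-module lift.

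The main obstacle I anticipate is precisely this last point: showing that the maps $X \to C_f$ and $X \to P_f$ genuinely have the required lifting properties \emph{in the $A \# H$-module category}, not merely after applying $U$. The functoriality of $C_f$ and $P_f$ and the explicit $A$-action (via $(a\phi)(g) = a\phi(g)$ on $\Hom_k(H,M)$, and the evident action on $X \otimes H$) mean the factorization maps are $A \# H$-linear, but checking that an arbitrary lifting problem admits an $A \# H$-linear solution requires either a small-object argument adapted to this category or a direct construction of the lift using the retractions furnished by $\varepsilon(\lambda) \ne 0$. I would organize the proof so that the explicit retractions do the work: because $C_f \to Y$ and $X \to P_f$ are split (as $H$-maps) with projective complement, the lifts can be written down by the formula ``project to the splitting and compose'', and one then verifies $A \# H$-linearity of the resulting formula using the $H$-module-algebra axioms $h(ab) = \Sigma(h_{(1)}a)(h_{(2)}b)$ and $h1_A = \varepsilon(h)1_A$ exactly as in the proof of the preceding Proposition. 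The verification that the complements are $H$-projective is the technical heart, and reduces, via the standard triangles $X \to X\otimes H \to \Sigma X$ and $\Sigma^{-1}X \to \Hom_k(H,X) \to X$ together with $\Ker(\varepsilon)\otimes (H/(\lambda)) \cong k \oplus Q$ from Lemma~\ref{lem23}, to the fact that tensoring with $H$ produces free $H$-modules.
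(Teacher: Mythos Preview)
Your plan diverges from the paper's proof and contains a genuine gap. The paper does \emph{not} use $C_f$ and $P_f$ themselves as the intermediate objects. Instead it imports Qi's functorial cofibrant replacement as a black box and runs a Hovey--Christensen argument: for the first factorization one replaces $C_f$ by a cofibrant $QC_f\xrightarrow{\simeq}C_f$, sets $u\colon QC_f\to\Sigma X$, forms $P_u$ for \emph{that} map, and factors $f$ as $X\to P_u\to Y$; the cofibrancy of $QC_f$ together with the $A$-splittings is what produces the required lift. The second factorization is dual, using $C_v$ built from a fibrant replacement of $P_f$.

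Your direct use of $C_f$ and $P_f$ fails at two concrete points. For the trivial-cofibration/fibration factorization, the map $P_f\to Y$ is not surjective in general: since $P_f=X\times_Y\Hom_k(H,Y)$, its image in $Y$ equals $\mathrm{im}(f)\cap\mathrm{im}\bigl(\Hom_k(H,Y)\to Y\bigr)=\mathrm{im}(f)$, so $P_f\to Y$ is a fibration only when $f$ already was one; the surjectivity of $\lambda^{\ast}\otimes\mathrm{id}_Y$ does not rescue this. For the cofibration/trivial-fibration factorization, the retraction $C_f\to Y$ has kernel isomorphic to $C_f/Y\cong\Sigma X=X\otimes_k\bigl(H/(\lambda)\bigr)$, and this is \emph{not} $H$-projective: if $H/(\lambda)$ were projective then $0\to k\lambda\to H\to H/(\lambda)\to 0$ would split, making the trivial module $k\lambda$ a summand of $H$ and hence projective, contradicting non-semisimplicity. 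Thus $C_f\to Y$ is not a stable equivalence, and your claim that the ``error term'' is $H$-projective is incorrect. The paper's detour through $QC_f$ and $RP_f$ is precisely what fixes both problems; you cannot shortcut it with $C_f$ and $P_f$ alone.
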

\begin{proof}
In this case, we can take a cofibrant replacement constructed in \cite{Qi} and all objects are fibrant. Thus, it suffices to show that, if we have a cofibrant replacement and fibrant replacement, respectively, we obtain the desired factorization of this lemma. 

Our proof is based on the proof in the case of chain complexes which is given by Hovey and Christensen. 

Let us take a cofibrant replacement $QC_f \to C_f$ of $C_f$. Here, the map $QC_f \to C_f$ is a trivial fibration. Then, we have the following diagram : 
\[ 
 \xymatrix@1{
X \ar@{.>}[r] \ar[d]_{=} & P_u \ar@{.>}[d] \ar[r] & QC_f \ar[d]_{\simeq} \ar[r]^u & \Sigma X \ar[d]_{=} \\
X \ar[r]^f & Y \ar[r] & C_f \ar[r] & \Sigma X .
} \] 
Note that two vertial maps in the middle square is surjective and two horizontal maps in the middle square is $A$-split. We define the map $u$ as a map the right square commutes. We construct the map $P_u \to Y$ by using $A$-splittings and the argument before Lemma~\ref{equiv}. We have the map $\Sigma^{-1} \Sigma X \to P_u$ by the universality of desuspension. The map $P_u \to Y$ is surjective since it is composed of two surjections. Passing to the quotient triangulated category, the map $P_u \to Y$ is a weak equivalence. 

We show that the map $X \to P_u$ is a cofibration. 
We consider the following diagram
\[ 
 \xymatrix@1{
X  \ar[d] \ar[r] & M \ar[d] \\
P_u \ar[d] \ar[r] & N  \\
QC_f \ar@{.>}[uur] &  
} \] 
where $M \to N$ is a trivial fibration and $P_u \to QC_f$ is $A$-split. Since $QC_f$ is cofibrant, we have the dotted arrow. By using $A$-splitting endowed with trivial $H$-action, we have the lifting $P_u \to M$. 

Similarly, let us take a fibrant replacement $P_f \to RP_f$. Here, the map $P_f \to RP_f$ is a trivial cofibration. Then, we have the following diagram : 

\[ 
 \xymatrix@1{
\Sigma^{-1}Y \ar[r] \ar[d]_{=} & P_f \ar[d]_{\simeq} \ar[r] & X \ar@{.>}[d] \ar[r]^f & Y \ar[d]_{=} \\
\Sigma^{-1}Y \ar[r]^v & RP_f \ar[r] & C_v \ar@{.>}[r] & Y .
} \] 
Note that two horizontal maps in the middle square is $A$-split. The map $v$ is defined to be the left square commutes. We construct the map $X \to C_v$ by using $A$-splittings and the argument before Lemma~\ref{equiv}. We have the map $C_v \to \Sigma \Sigma^{-1}Y$ by the universality of suspension. The map $X \to C_v$ is a stable equivalence since it is composed of two stable equivalences. 

Note that $X \to C_v$ is also cofibration since $A$-split monomorphisms are included in the class of cofibrations. 

We show that the map $C_v \to Y$ is a fibration. 
We consider the following diagram
\[ 
 \xymatrix@1{
 & RP_f \ar[d] \\
M \ar[d] \ar[r] & C_v \ar[d] \\
N \ar[r]  \ar@{.>}[uur] &  Y
} \] 
where $M \to N$ is a trivial cofibration and $RP_f \to C_v$ is $A$-split. Since $RP_f$ is fibrant, we have the dotted arrow. By using $A$-splitting endowed with trivial $H$-action, we have the lifting $N \to C_v$. 
\end{proof}

Finally, we obtain the following proposition. 
\begin{prop}\label{main}
The category $\LMod_{A \# H}$ inherits a model structure with respect to the three classes of maps defined as in Definition~\ref{ModelAH}. 
\end{prop}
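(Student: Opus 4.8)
The plan is to check, one at a time, the axioms of a (closed) model category for the triple $(\mathcal{W},\mathcal{C},\mathcal{F})$ of weak equivalences, cofibrations and fibrations of Definition~\ref{ModelAH}, each axiom being supplied by a statement already proved. Bicompleteness of $\LMod_{A\#H}$ was recorded after that definition. The two-out-of-three axiom for $\mathcal{W}$ is the two-out-of-three lemma above, and closure of $\mathcal{W}$, $\mathcal{C}$ and $\mathcal{F}$ under retracts is the retract lemma above. The two factorization axioms — every morphism factors as a cofibration followed by a trivial fibration, and as a trivial cofibration followed by a fibration — are exactly the factorization lemma above; this is the one step carrying genuine weight, and it rests on Qi's functorial cofibrant replacement \cite{Qi} together with the fact that every $A\#H$-module is fibrant, the standing hypothesis $\varepsilon(\lambda)\neq 0$ entering precisely there.

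It then remains to establish the two lifting axioms. That every cofibration has the left lifting property against every trivial fibration is built into Definition~\ref{ModelAH}(iii), so there is nothing to prove. For its mirror — that every trivial cofibration has the left lifting property against every fibration — I would first record a structural observation: applying the factorization lemma to a trivial cofibration $i$ and then invoking two-out-of-three exhibits $i$ as a retract, in the arrow category, of a trivial cofibration of the special $A$-split shape produced in that proof; since a retract of an $A$-split monomorphism is again an $A$-split monomorphism, every trivial cofibration is an $A$-split monomorphism, and in particular a monomorphism, so that $U(i)$ is a trivial cofibration in $\LMod_H$. Given then a lifting square with such an $i\colon X\to Y$ on the left and a fibration $p\colon M\to N$ on the right, one produces the diagonal filler exactly as the lifts are produced in the proof of the factorization lemma — from the $A$-splitting of $i$ one obtains an $A$-linear candidate, which is corrected to a morphism carrying the trivial $H$-action by averaging against the left integral $\lambda$ (legitimate only because $\varepsilon(\lambda)\neq 0$), so that by Lemma~\ref{equiv} and \cite[Lemma 5.2]{Qi} it becomes an $A\#H$-module morphism — and one checks that the two lifting identities, which involve only the $A\#H$-linear data of the square, are preserved by the correction. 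This is the precise content of the remark, preceding the factorization lemma, that the lifting property is seen by restricting the square to $\LMod_H$, where every object is cofibrant and fibrant. With bicompleteness, two-out-of-three, retracts, factorization and both lifting axioms in hand, $(\mathcal{W},\mathcal{C},\mathcal{F})$ is a model structure on $\LMod_{A\#H}$.

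I expect the real obstacle to have been the factorization axiom, and that is already behind us; within this proposition the only delicate point is the passage from an $A$-linear lift to an $A\#H$-linear one in the trivial-cofibration axiom, handled by the integral-averaging device that pervades the proof of the factorization lemma. So the proof of Proposition~\ref{main} amounts to transporting the stable model structure of Definition~\ref{ModelH} on $\LMod_H$ across the free--forgetful adjunction $A\otimes_k(-)\dashv U$, invoking $\varepsilon(\lambda)\neq 0$ only where the factorization lemma already did.
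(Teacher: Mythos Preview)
Your approach matches the paper's: the proof there is a single sentence pointing to the preceding lemmas (bicompleteness, two-out-of-three, retracts, factorization), together with the one-line remark placed just before the factorization lemma that the lifting axioms follow ``by restricting a certain commutative square of $A\#H$-modules to a commutative square of $H$-modules and by the definition of cofibrations, fibrations and weak equivalences.'' Your treatment of the trivial-cofibration-versus-fibration lifting via the retract argument and integral averaging is considerably more detailed than anything the paper actually writes down at this point, but the overall strategy --- assemble the prior lemmas and declare the remaining lifting axiom handled --- is the same.
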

\begin{proof}
By the lemmas and the fact that all limits and colimits exists since it is a left module category. 
\end{proof}

\begin{cor}
The forgetful functor $U : \LMod_{A \# H} \to \LMod_H$ is the right Quillen functor. Here, we regard $\LMod_H$ as the model category as in Definition~\ref{ModelH}.  
\end{cor}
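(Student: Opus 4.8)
The plan is to verify directly that $U$ meets the two defining conditions of a right Quillen functor: it is a right adjoint, and it preserves fibrations and acyclic fibrations. The first condition is immediate, since $U$ is the right-hand member of the free--forgetful adjunction $A \otimes_k (-) : \LMod_H \rightleftarrows \LMod_{A \# H} : U$ recorded in Section~2, whose left adjoint is the free functor $A \otimes_k (-)$; both categories are bicomplete.

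For the second condition I would simply unwind Definition~\ref{ModelAH}. By part~(ii) of that definition a morphism $f$ in $\LMod_{A \# H}$ is a fibration exactly when $Uf$ is a fibration (a surjection) in $\LMod_H$, so $U$ sends fibrations to fibrations. By part~(i), $f$ is a weak equivalence exactly when $Uf$ is a stable equivalence in $\LMod_H$, so $U$ sends weak equivalences to weak equivalences; consequently it sends acyclic fibrations to acyclic fibrations. Combined with the adjunction, this is precisely the statement that $U$ is right Quillen, and dually that $A \otimes_k (-)$ is a left Quillen functor, hence preserves cofibrations and acyclic cofibrations.

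There is essentially no obstacle here: the model structure of Proposition~\ref{main} was constructed so that $U$ creates fibrations and weak equivalences, so the corollary is a formal consequence of that construction rather than something that requires the factorization arguments from the preceding lemma. If one wishes instead to argue on the cofibration side, one checks that the left adjoint carries (trivial) cofibrations to (trivial) cofibrations; this follows from the adjunction together with the observation that the trivial fibrations in $\LMod_{A \# H}$ are precisely the morphisms $f$ with $Uf$ a trivial fibration in $\LMod_H$, i.e. the $U$-preimages of the trivial fibrations of $\LMod_H$. Either route yields the claim, but the fibration-side verification above is the shortest.
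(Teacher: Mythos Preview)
Your argument is correct and matches the paper's treatment: the paper gives no proof beyond a \qed, since the corollary is immediate from the way fibrations and weak equivalences in $\LMod_{A\#H}$ are \emph{defined} via $U$ together with the free--forgetful adjunction. Your write-up simply spells out this formal verification.
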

\qed

\begin{defn}[\cite{Qi}, Definition 6.1]\label{QiCof}
Let $M$ be a $A \# H$-module. 
$M$ is said to be cofibrant if for any surjective quasi-isomorphism $L \to N$ of left $A \# H$-modules, the induced map of $k$-vector spaces
\[
\Hom_{A \# H}(M, L) \to \Hom_{A \# H}(M, N)
\]
is surjective. 




\end{defn}

\begin{prop}
The cofibrant objects with respect to the model structure obtained as in Proposition~\ref{main} coinside with the cofibrant modules as in Definition~\ref{QiCof}  
\end{prop}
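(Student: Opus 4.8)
The plan is to unwind both notions into the same left-lifting condition. By definition $M$ is cofibrant in the model structure of Proposition~\ref{main} exactly when the map $0 \to M$ from the initial (zero) object is a cofibration, and by Definition~\ref{ModelAH}(iii) this means precisely that $0 \to M$ has the left lifting property with respect to every trivial fibration. For a lifting square whose source is the zero object,
\[
\xymatrix@1{
0 \ar[r] \ar[d] & L \ar[d]^{p} \\
M \ar[r] \ar@{-->}[ur] & N ,
}
\]
a lift exists for every bottom map $M \to N$ if and only if the induced map of $k$-vector spaces $\Hom_{A \# H}(M, L) \to \Hom_{A \# H}(M, N)$ is surjective. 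Hence $M$ is cofibrant in the model structure if and only if $\Hom_{A \# H}(M, L) \to \Hom_{A \# H}(M, N)$ is surjective for every trivial fibration $p \colon L \to N$.

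Next I would identify the trivial fibrations with the surjective quasi-isomorphisms of Definition~\ref{QiCof}. A map $p \colon L \to N$ of $A \# H$-modules is a fibration iff $U(p)$ is surjective, and since $U$ only forgets structure this is the same as $p$ being surjective; and $p$ is a weak equivalence iff $U(p)$ is a stable equivalence of $H$-modules, iff $\underline{U}(p)$ is an isomorphism in $\lmod_H$, iff the image of $p$ in $\lmod_{A \# H}$ is a quasi-isomorphism --- this is the chain of identifications recorded in Remark~\ref{Rem}, together with the definition of $\underline{U}$ and Definition~\ref{AHstable}. Therefore the trivial fibrations in our model structure are exactly the surjective quasi-isomorphisms of left $A \# H$-modules appearing in Definition~\ref{QiCof}.

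Putting the two steps together, $M$ is cofibrant in the model structure obtained in Proposition~\ref{main} if and only if $\Hom_{A \# H}(M, -)$ sends every surjective quasi-isomorphism $L \to N$ to a surjection, which is verbatim the condition of Definition~\ref{QiCof}. This proves the proposition. The only point requiring real care is the middle identification, namely that the weak equivalences of the model structure coincide with the quasi-isomorphisms in the sense of \cite{Qi}; everything else is the formal observation that cofibrancy of $M$ is a left lifting property against the trivial fibrations and that a lifting square with initial source encodes exactly the surjectivity of the corresponding map of Hom-spaces.
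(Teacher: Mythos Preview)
Your proof is correct and follows essentially the same approach as the paper: both reduce the question to identifying the trivial fibrations of the model structure with the surjective quasi-isomorphisms of \cite{Qi}, after which the two cofibrancy conditions become literally the same lifting property. Your version is in fact more explicit, spelling out the lifting-square-to-Hom-surjectivity translation and treating both the fibration and weak-equivalence identifications, whereas the paper's proof focuses only on the equality ``fibration $=$ surjection'' and leaves the weak-equivalence identification implicit via Remark~\ref{Rem}.
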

\begin{proof}
For any surjective weak equivalence of $B$-modules, Qi's cofibrant objects satisfies the lifting property. Therefore, it suffices to show that an $A \# H$-module map is a  fibration if and only if it is a surjection. By definition of a fibration, an $A \# H$-module map is a fibration if and only if it is a fibration of $H$-modules, which turns out to be a surjection~\cite[Lemma 2.2.6]{Hov}. Thus, Qi's cofibrant objects are cofibrant objects with respect to the relative model structure. 
\end{proof}

\begin{cor}
The homotopy category of the model structure on $\LMod_{A \# H}$ is the derived category $\mathcal{D}(A, H)$ of $A \# H$-modules as in \cite{Qi}. 
\end{cor}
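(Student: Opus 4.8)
The plan is to produce a canonical functor $\bar\gamma:\mathrm{Ho}(\LMod_{A \# H})\to\mathcal{D}(A,H)$ and to prove it is an equivalence, using that the weak equivalences and the cofibrant objects have already been identified. First I would invoke the general fact that the homotopy category of a model category $\mathcal M$ is the localization $\mathcal M[W^{-1}]$ of $\mathcal M$ at its class $W$ of weak equivalences, and that, when every object is fibrant, it is computed as the category whose objects are the cofibrant ones and whose morphisms are Quillen homotopy classes of maps. By Definition~\ref{ModelAH} the weak equivalences in $\LMod_{A \# H}$ are the stable equivalences, and by Remark~\ref{Rem} these are exactly the morphisms whose image in $\lmod_{A \# H}$ is a quasi-isomorphism in the sense of \cite{Qi}. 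Since $\mathcal{D}(A,H)$ is by definition $\lmod_{A \# H}$ with the quasi-isomorphisms inverted, the composite
\[
\gamma:\LMod_{A \# H}\longrightarrow\lmod_{A \# H}\longrightarrow\mathcal{D}(A,H)
\]
sends weak equivalences to isomorphisms, hence factors uniquely through a functor $\bar\gamma:\mathrm{Ho}(\LMod_{A \# H})\to\mathcal{D}(A,H)$.

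Essential surjectivity of $\bar\gamma$ is immediate, since it is the identity on objects and every $A \# H$-module has a cofibrant replacement by \cite{Qi}. For full faithfulness one reduces, by cofibrant replacement, to a cofibrant module $M$ (equivalently, by the proposition above, a cofibrant module in the sense of Definition~\ref{QiCof}) and an arbitrary $N$. As every object is fibrant, $\Hom_{\mathrm{Ho}(\LMod_{A \# H})}(M,N)$ is $\Hom_{\LMod_{A \# H}}(M,N)$ modulo the Quillen homotopy relation, while $\Hom_{\mathcal{D}(A,H)}(M,N)=\Hom_{\lmod_{A \# H}}(M,N)$ for $M$ cofibrant by \cite{Qi} (a surjective quasi-isomorphism onto $N$ splits off $N$ up to a contractible summand, which is exactly what the cofibrancy of $M$ detects). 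Thus $\bar\gamma$ is fully faithful as soon as, for cofibrant $M$, the Quillen homotopy relation on maps $M\to N$ agrees with Qi's null-homotopy relation, namely ``$f-g$ factors through an $A \# H$-module that is projective as an $H$-module''.

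Verifying this coincidence is the step I expect to be the main obstacle, and it is where the standing hypothesis $\varepsilon(\lambda)\neq 0$ is used once more. I would build a good path object for $N$ out of the loop-space construction of the proposition on $\Hom_k(H,M)$: the $A \# H$-module $\Hom_k(H,N)$ is, as an $H$-module, isomorphic to $H^{\ast}\otimes_k N\cong H\otimes_k N$, hence $H$-free and therefore contractible. Factoring the diagonal $N\to N\times N$ through a weak equivalence $N\xrightarrow{\ \simeq\ }\mathrm{Path}(N)$ (obtained from $\varepsilon(\lambda)\neq 0$ exactly as in the factorization lemma) followed by a fibration $\mathrm{Path}(N)\twoheadrightarrow N\times N$ whose fibre over the diagonal is the contractible module $\Hom_k(H,N)$, one sees that a right homotopy $M\to\mathrm{Path}(N)$ between $f$ and $g$ is precisely the datum of a factorization of $f-g$ through that contractible module, and conversely. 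Hence the two relations agree, so $\bar\gamma$ is fully faithful, and therefore an equivalence; one checks in addition that it is compatible with the triangulated structures on both sides. Everything here is formal except for the explicit construction of the path object and the bookkeeping of the $\varepsilon(\lambda)$-normalisations, which is the part requiring the most care.
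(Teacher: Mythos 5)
Your proposal is correct in outline, and it is worth noting that the paper itself offers no proof of this corollary at all --- it is stated bare, immediately after the proposition identifying the cofibrant objects, with the implicit reading that it follows formally from that proposition together with Remark~\ref{Rem}. Your argument supplies the actual content that such a formal deduction needs: (a) both categories are localizations of $\LMod_{A\#H}$ at the same class of maps (weak equivalences $=$ stable equivalences $=$ preimages of Qi's quasi-isomorphisms), so a comparison functor $\bar\gamma$ exists; and (b) the only non-formal point is that the Quillen homotopy relation on maps out of a cofibrant $M$ coincides with Qi's null-homotopy relation, since $\mathcal{D}(A,H)$ is built from the quotient $\lmod_{A\#H}$ rather than from $\LMod_{A\#H}$ directly. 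Your path-object construction from $\Hom_k(H,N)$ is the right device for (b) and is consistent with the $P_f$ construction the paper already uses in the factorization lemma. Two small points of care: the contractible module that witnesses the homotopy is really the cokernel of the weak equivalence $N\to\mathrm{Path}(N)$ (a split-off summand $\Hom_k(H,N)$), not literally the fibre of $\mathrm{Path}(N)\twoheadrightarrow N\times N$ over the diagonal; and in the converse direction you must check that a map factoring through an \emph{arbitrary} $H$-projective $A\#H$-module also factors through the surjection $\Hom_k(H,N)\to N$ by an $A\#H$-linear lift, which uses $H$-injectivity of contractibles together with Qi's Lemma~5.2 to upgrade the $H$-linear lift to an $A\#H$-linear one. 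Neither point is an obstruction, and your proof is more complete than what the paper records.
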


We also mention that compact objects of left $H$-modules and left $A \# H$-modules, respectively, via the adjoint functor $A \otimes_k (-)$ and $U$. 

\begin{lemma}

\[
A \otimes_k (-) : \LMod_H \rightleftarrows \LMod_{A \# H} : U . 
\]
Here, $U$ is the functor which regards $A \# H$-modules as $H$-modules and $A \otimes_k (-)$ is the left adjoint, which is given by the free functor. 
\end{lemma}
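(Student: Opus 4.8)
The plan is to recognize $A\otimes_k(-)$ as extension of scalars along an algebra map, so that the asserted adjunction becomes the standard extension--restriction adjunction. First I would check that $\iota\colon H\to A\#H$, $h\mapsto 1_A\#h$, is a unital $k$-algebra homomorphism: it is unital because $1_A\#1=1_{A\#H}$, and $\iota(g)\iota(h)=(1_A\#g)(1_A\#h)=\Sigma\,1_A(g_{(1)}1_A)\#g_{(2)}h=\Sigma\,\varepsilon(g_{(1)})1_A\#g_{(2)}h=1_A\#gh$ by the module-algebra axiom $g1_A=\varepsilon(g)1_A$ together with counitality. The forgetful functor $U$ is exactly restriction of scalars along $\iota$, hence it has a left adjoint given by extension of scalars $(A\#H)\otimes_H(-)$; moreover $U$, being restriction of scalars, preserves all small limits and colimits.

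It then remains to identify $(A\#H)\otimes_H V$ with the free $A\#H$-module $A\otimes_k V$, whose action is $(a\#h)\cdot(b\otimes v)=\Sigma\,a(h_{(1)}b)\otimes h_{(2)}v$. Using that $A\#H$ is free as a right $H$-module --- right multiplication by $\iota(h)$ sends $a\#g$ to $\Sigma\,a(g_{(1)}1_A)\#g_{(2)}h=a\#gh$, so any $k$-basis of $A$ is a right $H$-basis of $A\#H$ --- one obtains a $k$-linear isomorphism $(A\#H)\otimes_H V\to A\otimes_k V$, $(a\#g)\otimes v\mapsto a\otimes gv$, and the only genuinely computational point is to verify that this map is left $A\#H$-linear for the diagonal action on the target; this expands $h_{(1)}$ against the module-algebra relation $h(ab)=\Sigma(h_{(1)}a)(h_{(2)}b)$ and reassembles Sweedler indices. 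Naturality in $V$ is then automatic.

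Alternatively, and more in line with the machinery already in place, one can build the Hom-bijection directly: combining the isomorphism of left $H$-modules $\Hom_A(A\otimes_k V,M)\cong\Hom_k(V,M)$ recorded before Lemma~\ref{equiv} with Qi's Lemma~5.2 applied to the $A\#H$-module $A\otimes_k V$ gives
\[
\Hom_{A\#H}(A\otimes_k V,M)=\Hom_H\!\bigl(k_0,\Hom_A(A\otimes_k V,M)\bigr)\cong\Hom_H\!\bigl(k_0,\Hom_k(V,M)\bigr)\cong\Hom_H(V,UM),
\]
the last step because a $k$-linear map is fixed by the conjugation $H$-action precisely when it is $H$-linear, as checked in the paragraph preceding Lemma~\ref{equiv}; one then verifies naturality in $M$ and $V$. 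I do not expect a real obstacle: once $A\otimes_k(-)\cong(A\#H)\otimes_H(-)$ is established the adjunction is formal, and the only step to watch is the $A\#H$-linearity above, where the module-algebra axiom (not merely $H$-linearity) must be used. As a consequence, since the right adjoint $U$ preserves coproducts, $A\otimes_k(-)$ sends compact $H$-modules to compact $A\#H$-modules, and when $\dim_k A<\infty$ the module $A\#H$ is finitely generated free over $H$, so $U$ preserves compactness as well.
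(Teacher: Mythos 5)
Your proposal is correct, and it actually proves more than the paper does. The paper's own ``proof'' of this lemma is two sentences long and addresses only the remark preceding the lemma about compact objects: it observes that $U$ preserves colimits and invokes the general fact that a left adjoint whose right adjoint preserves (filtered) colimits preserves compact objects; the adjunction itself is simply taken as known from the isomorphism $\Hom_A(A\otimes_k V,M)\cong\Hom_k(V,M)$ of left $H$-modules stated without proof before Lemma~\ref{equiv}. You instead establish the adjunction honestly, by exhibiting $U$ as restriction of scalars along the algebra map $\iota\colon H\to A\#H$, $h\mapsto 1_A\#h$, and identifying $(A\#H)\otimes_H V$ with $A\otimes_k V$ via $(a\#g)\otimes v\mapsto a\otimes gv$ (the well-definedness and $A\#H$-linearity checks you flag are exactly the points that need the module-algebra axiom, and they go through); your alternative route through Qi's $\Hom_{A\#H}(M,N)=\Hom_H(k_0,\Hom_A(M,N))$ is also valid and closer in spirit to the paper's earlier discussion. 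Your closing paragraph then recovers precisely the paper's argument about compactness. In short: same formal skeleton for the compactness consequence, but you supply the verification of the adjunction that the paper omits, which is the literal content of the stated lemma.
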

\begin{proof}
If the right adjoint preserves colimits, the left adjoint  preserves compact objects. The forgetful functor $U$ preserves colimits.  
\end{proof}



\section{Cofibrant generation and some properties}

In this section, we explain the small object argument and show that the model structure defined in the previous section is cofibrantly generated.

\begin{defn}
Suppose $\mathcal{C}$ is a model category. We say that $\mathcal{C}$ is cofibrantly generated if there are sets $I$ and $J$ of maps such that : 
\begin{itemize}
\item The domains of the maps of $I$ are small relative to $I$-cell,
\item The domains of the maps of $J$ are small relative to $J$-cell, 
\item The class of fibrations is $J$-inj; i.e., they have the right lifting property with respect to every map in $J$, and 
\item The class of trivial fibrations is $I$-inj. 
\end{itemize}
We refer to $I$ as the set of generating cofibrations, and to $J$ as the set of generating trivial cofibrations. 
\end{defn}


\begin{prop}
The model structure on $\LMod_{A \# H}$ obtained in Proposition~\ref{main} is cofibrantly generated. Explicitely, the sets $I_{A \# H}=\{ A \otimes J \to A \otimes H \}$ and $J_{A \# H}=\{ 0 \to A \otimes H \}$ are generating cofibrations and generating trivial cofibrations, respectively, where $J$ runs over left ideals of $H$. 
\end{prop}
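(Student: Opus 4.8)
The plan is to verify the four defining conditions of a cofibrantly generated model category directly for the explicit sets $I_{A\# H}$ and $J_{A\# H}$, exploiting the free–forgetful adjunction $A\otimes_k(-)\dashv U$ to transport everything from the already-understood situation on $\LMod_H$. First I would recall that $\LMod_H$ is itself cofibrantly generated in the stable model structure: since $H$ is a finite-dimensional Frobenius algebra, one checks that the set $I_H=\{J\hookrightarrow H\}$ with $J$ ranging over left ideals of $H$ generates the cofibrations (monomorphisms), and $J_H=\{0\to H\}$ generates the trivial cofibrations, because the injective objects are exactly the projective ones, namely the free $H$-modules. Then the maps in $I_{A\# H}$ and $J_{A\# H}$ are precisely $A\otimes_k(-)$ applied to the maps in $I_H$ and $J_H$, so by adjunction a lifting problem for an $A\# H$-module map $p$ against $A\otimes_k i$ is the same as a lifting problem for $Up$ against $i$ in $\LMod_H$.

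The key steps, in order. (1) Identify $I_{A\# H}\text{-inj}$: by the adjunction, $p$ has the right lifting property against every $A\otimes_k(J\hookrightarrow H)$ iff $Up$ has the right lifting property against every $J\hookrightarrow H$ in $\LMod_H$, which by the cofibrant generation of $\LMod_H$ means $Up$ is a trivial fibration, i.e. a surjective stable equivalence; by Definition~\ref{ModelAH} and the 2-out-of-3 already proved, this is exactly a trivial fibration in $\LMod_{A\# H}$. So $I_{A\# H}\text{-inj}$ equals the class of trivial fibrations. (2) Identify $J_{A\# H}\text{-inj}$: similarly $p$ lifts against $0\to A\otimes_k H$ iff $Up$ lifts against $0\to H$, which characterizes surjections onto $H$-projectives—equivalently, by \cite[Lemma 2.2.6]{Hov} and the Frobenius property, surjections of $H$-modules; hence $J_{A\# H}\text{-inj}$ is exactly the class of fibrations in $\LMod_{A\# H}$. (3) Smallness of domains: the domains appearing are $A\otimes_k J$, $A\otimes_k H$, and $0$; since $\LMod_{A\# H}$ is a category of modules over a ring, every object is small relative to the whole category (it is finitely presented, or at worst $\kappa$-small for $\kappa$ the cardinality of a generating set), so in particular the domains are small relative to $I_{A\# H}\text{-cell}$ and $J_{A\# H}\text{-cell}$ respectively; the zero object is small trivially. (4) Finally, having matched $I_{A\# H}\text{-inj}$ with the trivial fibrations and $J_{A\# H}\text{-inj}$ with the fibrations, and knowing from Proposition~\ref{main} that these three classes already form a model structure, the four bullet conditions of the definition are satisfied, so the model structure is cofibrantly generated with the stated generators.

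I would fill in two supporting points. First, that $J_{A\# H}$ consists of trivial cofibrations and $I_{A\# H}$ of cofibrations: a cofibration is by definition a map with the left lifting property against trivial fibrations $=I_{A\# H}\text{-inj}$, and every map in $I_{A\# H}$ trivially has this property, being in $\text{(}I_{A\# H}\text{-inj)-proj}$; likewise each $0\to A\otimes_k H$ is a cofibration (it lifts against surjections since $A\otimes_k H$ is free, hence $U$-projective) and is a weak equivalence because $A\otimes_k H$ is $H$-projective, so $U(0\to A\otimes_k H)$ is a stable equivalence between stably-zero objects. Second, I would spell out why $I_H$ and $J_H$ generate the model structure on $\LMod_H$: the right lifting property against all $J\hookrightarrow H$ forces surjectivity (take $J=0$, so $0\to H$ detects surjections onto $H$, then use that every module embeds in a free one) together with the splitting of the kernel as an $H$-module (this is where the exterior/Frobenius structure enters, matching Lemma~\ref{lem23}), which is precisely being a trivial fibration in the stable structure; the right lifting property against $0\to H$ alone is surjectivity.

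The main obstacle I anticipate is step (1)—pinning down that $I_{A\# H}\text{-inj}$ is exactly the trivial fibrations rather than something larger. The subtlety is that the weak equivalences here are stable equivalences, not isomorphisms on some naive homology, so I must be careful that the lifting property against $\{J\hookrightarrow H\}$ really does encode both surjectivity and the stable-acyclicity of the kernel; this rests on the Frobenius property of $H$ (projectives $=$ injectives) and on the concrete computation behind Lemma~\ref{lem23}, and on the assumption $\varepsilon(\lambda)\ne 0$ insofar as it was already used to produce the factorizations in Proposition~\ref{main}. Once the $\LMod_H$ side is correctly established, transporting along the adjunction is formal, so the entire weight of the argument sits in that single identification together with the standard smallness fact for module categories.
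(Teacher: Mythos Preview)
Your proposal is correct and follows essentially the same approach as the paper: both arguments reduce the identification of $I_{A\#H}\text{-inj}$ and $J_{A\#H}\text{-inj}$ to the corresponding facts for $I_H=\{J\hookrightarrow H\}$ and $J_H=\{0\to H\}$ in $\LMod_H$ (which the paper records as assertion~($\blacklozenge$), citing Hovey) by transporting lifting problems across the free--forgetful adjunction $A\otimes_k(-)\dashv U$. Your treatment is somewhat more systematic---you explicitly isolate the smallness condition and verify that the maps in $J_{A\#H}$ are trivial cofibrations---whereas the paper handles smallness by invoking Noetherianity of $H$ and leaves the remaining checks implicit, but the substance of the two arguments is the same.
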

\begin{proof}
Note that $H$ is a Noetherian ring since it is finite dimensional over a field $k$ and since the forgetfull functor from the category of $A \# H$-modules to the category of $H$-modules is exact, it suffices to show that the class of fibrations is $J$-inj and the class of trivial fibrations is $I$-inj. 

We can see that maps satisfying right lifting property to those maps in $J$ and $I$, respectively, are just class of fibrations and class of trivial fibrations. 
Note that the forgetfull functor from the category of $A \# H$-modules to the category of $H$-modules is exact. 
\[
A \otimes_k (-) : \LMod_H \rightleftarrows \LMod_{A \# H} : U . 
\]
Here, $U$ is the functor which regards $A \# H$-modules as $H$-modules and $A \otimes_k (-)$ is the left adjoint, which is given by the free functor. 

Note that the class of fibrations and trivial fibrations are determined those classes in $\LMod_H$ since the two classes in $\LMod_{A \# H}$ are defined by using the stable model structure of $\LMod_H$ via the right adjoint $U : \LMod_{A \# H} \to \LMod_H$. 

First of all, a map satisfying right lifting property with respect to maps in $J$ if and only if a surjection as $A \# H$-module map by ~\cite{Hov} since a surjection of modules is just categorical epimorphism in this case.

Let us assume $H$ a Frobenius ring and if we take a class $I'= \{J \to H \}$ of $H$-module maps, where $J \subset H$ runs left ideals of $H$, the following two assertion which are follows from \cite{Hov}.  
 

($\blacklozenge$) An $H$-module map has the right lifting property with respect to maps in $I'$ if and only if the map is trivial fibration in $\LMod_H$. 

 

We will see that if a map of $A \# H$-modules $p : M \to N$ has the right lifting property with respect to $I$, then by regarding the map as an $H$-module map via the forgetfull functor $U$, we have the following combined diagram
\[ 
 \xymatrix@1{
J \ar[d]_i \ar[r] & A \otimes_k J \ar[d]_{id_A \otimes i} \ar[r] & M \ar[d]_p  \\
H\ar[r] & A \otimes_k H \ar[r] &  N ,
} \] 
so $p$ also has the right lifting prpperty with respect to $I'$. Together with ($\blacklozenge$), $p$ is a trivial fibration in $\LMod_{A \# H}$. Conversely, if a map $p: M \to N$ of $A \# H$-modules satisfying the right lifting property with respect to $I'$, we consider the right square of the diagram in $\LMod_{A \# H}$ and then assign the forgetfull functor $U$ and combine the left square of the diagram in $\LMod_H$. Then, we have a lifting $H \to M$ of $H$-modules. 
We set an $H$-module map $A \otimes_k H \to M$ by the map $H \to M$ and $A \otimes_k J \to M$ in the square for assign an element of $A$ to an element of $M$.  
\end{proof}

\bibliographystyle{amsplain} \ifx\undefined\bysame
\newcommand{\bysame}{\leavemode\hbox to3em{\hrulefill}\,} \fi
\begin{bibdiv}
\begin{biblist}

\bib{BMR}{article}{
   author={Barthel, Tobias},
   author={May, J. P.},
   author={Riehl, Emily},
   title={Six model structures for DG-modules over DGAs: model category
   theory in homological action},
   journal={New York J. Math.},
   volume={20},
   date={2014},
   pages={1077--1159},
}

\bib{CH}{article}{
 author={Christensen, D.},
   author={Hovey, M.},
   title={Quillen model structures for relative homological algebra},
   volume={133},
   publisher={Mathematical Proceedings of Cambridge Philosophical Society}
   date={2002},
   pages={261--293},
 }
 
\bib{Hov}{article}{
   author={Hovey, M.},
   title={Model categories},
   series={Mathematical Surveys and Monographs},
   volume={63},
   publisher={American Mathematical Society, Providence, RI}
   date={1999},
   pages={xii+209},
}

\bib{Keller1}{article}{
   author={Keller, B.},
   title={Deriving DG categories},
   journal={Ann. Sci. \'{E}cole Norm. Sup. (4)},
   volume={27},
   date={1994},
   number={1},
   pages={63--102},
}

\bib{Khov}{article}{
   author={Khovanov, M.},
   title={Hopfological algebra and categorification at a root of unity: the
   first steps},
   journal={J. Knot Theory Ramifications},
   volume={25},
   date={2016},
   number={3},
   pages={1640006, 26},
}

	
\bib{TO}{article}{
    author={Ohara, M.},
    author={Tamaki, D.}
    title={A cotorsion pair in Hopfological algebra},
    journal={preprint},
    volume={},
    date={2020},
    pages={}
 }


\bib{Qi}{article}{
   author={Qi, Y.},
   title={Hopfological algebra},
   journal={Compos. Math.},
   volume={150},
   date={2014},
   number={1},
   pages={1--45},
}


\end{biblist}		
\end{bibdiv}
\end{document}